\def\Z{{\mathbb{Z}}}
\let\reftagform@=\tagform@
\def\tagform@#1{\maketag@@@{(\ignorespaces\textcolor{magenta}{#1}\unskip\@@italiccorr)}}
\renewcommand{\eqref}[1]{\textup{\reftagform@{\ref{#1}}}}
\DeclareUrlCommand\ULurl@@{%
  \def\UrlLeft{\uline\bgroup}%
  \def\UrlRight{\egroup}}
\def\ULurl@#1{\hyper@linkurl{\ULurl@@{#1}}{#1}}
\DeclareRobustCommand*\ULurl{\hyper@normalise\ULurl@}
\def\lessim{\ \lower4pt\hbox{$
		\buildrel{\displaystyle <}\over\sim$}\ }
\def\gessim{\ \lower4pt\hbox{$\buildrel{\displaystyle >}
		\over\sim$}\ }
\newtheorem{theorem}{\bf Theorem}
\newtheorem{lemma}[theorem]{\bf Lemma}
\newtheorem{example}{\bf Example}
\newtheorem{proposition}[theorem]{\bf Proposition}
\theoremstyle{remark}
\newtheorem{remark}{Remark}
\newenvironment{Proof of lemma}{\noindent{\bf Proof of Lemma}}{\hfill$\Box$\newline}
\newenvironment{Proof of theorem}{\noindent{\bf Proof of Theorem}}{\hfill{\footnotesize${\square}$}\newline}
\newenvironment{Proof of theorems}{\noindent{\bf Proof of Theorems}}{\hfill$\Box$\newline}
\newenvironment{Proof of proposition}{\noindent{\bf Proof of Proposition}}{\hfill$\Box$\newline}
\newenvironment{Proof of propositions}{\noindent{\bf Proof of Propositions}}{\hfill$\Box$\newline}
\newenvironment{Proof of exercise}{\noindent{\it Proof of Exercise:}}{\hfill$\Box$}
\begin{document}
\title{On the Gardner Transition in the Ising pure $p$-Spin Glass}


\author{Yuxin Zhou}
\address{Department of Statistics, University of Chicago}
\email{yuxinzhou@uchicago.edu}

\begin{abstract}
We prove the existence of one-step replica symmetry breaking (1RSB) for the mean field
Ising spin glasses at finite temperature and identify the first critical temperature in Gardner transition. Specifically,
Gardner conjectured that for Ising pure $p$-spin glasses with  $p\geq 3,$ there are two phase transitions: the Parisi measure is Replica Symmetric (RS) at high temperatures, then transitions to One-Step Replica Symmetry Breaking (1RSB) at a lower temperature, and finally reaches Full Replica Symmetry Breaking (FRSB) at very low temperatures. Our main results verify the first part of the Gardner transition: for any $p \geq 3,$ there exists a unique $\beta^p_1>0$ such that the Parisi measure is RS for $0 < \beta \leq \beta^p_{1}$ and then 1RSB for $\beta^p_1 < \beta \leq \beta^p_1+\epsilon_p$ with some $\epsilon_p>0.$ We also provide a computational method to locate $\beta^p_1$ for any $p \geq 3.$
\end{abstract}

\maketitle
\section{Introduction and main results}

The Ising pure $p$-spin glass is a crucial example of mean field spin glasses, resulting in a broad array of problems and phenomena in both physical and mathematical sciences. For detailed information on its background, history, and methods, the books by Mezard, Parisi, and Virasoro \cite{mezard1987spin}, as well as Talagrand \cite{talagrand2006parisi-b} and their extensive references, are recommended.

In this paper, we investigate the structure of the functional order parameter for the mean field Ising pure $p$-spin model in the absence of external field. This order parameter, referred to as the Parisi measure, is expected to provide a comprehensive qualitative description of the system and has been extensively studied by researchers in both physics and mathematics \cite{mezard1987spin, talagrand2006parisi-b}.  Recent discoveries have shed light on Parisi measures in \cite{AChen15PTRF,Aukosh17PTRF}, yet the structure of these measures remains elusive. 
In previous studies of mean field Ising spin glasses, only  Parisi measures that are replica symmetric have been rigorously proven to exist in specific models  and no rigorous examples  beyond the replica symmetric phase have been established.


We now introduce the mean field Ising pure $p$-spin model. 
Let $p,N$ be integers with $p \geq 2$ and $N\geq1$. For any $N \geq 1$,  let $\Sigma_N:= \{ -1,+1 \}^N$ be the Ising spin configuration space. The Hamiltonian of the mean field Ising pure $p$-spin model  is a Gaussian function defined as
\begin{eqnarray*}
H_N(\sigma):=\frac{\beta}{N^{\frac{p-1}{2}}} \sum_{1 \leq i_1,\cdots,i_p \leq N} g_{i_1,\dots,i_p} \sigma_{i_1} \cdots \sigma_{i_p},
\end{eqnarray*}
for $\sigma=(\sigma_1,\cdots,\sigma_N) \in \Sigma_N,$
where all $(g_{i_1,\cdots, i_p})$, $1 \leq i_1,\cdots i_{p} \leq N$,  are independent, identically distributed standard Gaussian random variables.  Here $\beta>0$ is a parameter of the Ising pure $p$-spin model, called the inverse temperature.  When $p=2,$ the model is also called the Sherrington-Kirkpatrick (SK) model. In this paper, we mainly focus on the models with $p \geq 3.$

The Gaussian field $H_{N}$ is centered with covariance given by 
\begin{eqnarray*}
\mathbb{E} H_N(\sigma^1) H_N(\sigma^2)=N \xi(R_{1,2})
\end{eqnarray*}
where  $R_{1,2}:=\frac{1}{N}\sum^N_{i=1} \sigma_i^1 \sigma^2_i$
is the normalized inner product between $\sigma^1$ and $\sigma^2$ and 
\begin{eqnarray}\label{eq:psxi}
\xi(x):=\beta^2  x^p.
\end{eqnarray}

 Thanks to the groundbreaking work of Parisi \cite{parisi1979infinite,parisi1980infinite}, it was predicted that the thermodynamic limit of the free energy
can be computed by a variational formula and this prediction was subsequently rigorously confirmed by  Panchenko and Talagrand\cite{panchenko, talagrand2006parisi}. 
  To be more specific, denote the space $M[0,1]$ of all probability measures on $[0,1]$ and then consider the Parisi functional
$\mathcal{P}$ (see \eqref{Parisifunctional}) defined on $M[0,1].$ The following limit exists almost surely,
\begin{eqnarray*}
\lim_{N \rightarrow \infty} \frac1N \log {\sum_{\sigma \in \Sigma_N}  \exp H_N(\sigma)}=\inf_{\mu \in M[0,1]} \mathcal{P} (\mu).
\end{eqnarray*}

As an infinite dimensional variational formula, $\mathcal{P}$ is continuous and always has a minimizer. The uniqueness of the minimizer is first proven by  Auffinger and Chen\cite{{auffinger2015parisi}}. This unique minimizer of $\mathcal{P}$ is called the Parisi measure, denoted by $\mu_P$. The Parisi measure 
$\mu_P$ is essential for characterizing the mean field spin glass models. In contrast, the structure of the Parisi measures remains mysterious. 

We first present the classification of the structure of the Parisi measure $\mu_P$.  We say that the Parisi measure $\mu_P$ is Replica Symmetric (RS) if it's a Dirac measure; One Replica Symmetric Breaking (1RSB) if it consists of two atoms; Full Replica Symmetric
Breaking (FRSB) if its support contains some interval. 

To the best of our knowledge, only the structures of Parisi measures at high temperature in the SK model, i.e. $p=2$  and $0 < \beta < \frac{1}{\sqrt{2}}$  have been classified rigorously   by Aizenman, Lebowitz and
Ruelle in \cite{ALR}, which  are RS. As for low temperature, Toninelli \cite{Toni} showed  that the Parisi measure is not
RS for $\beta>\frac{1}{\sqrt{2}}$.   Beyond that,  there were no rigorous examples of Ising spin glass models other than the replica symmetric phase. Moreover, for $p \geq 3$,  no phase transitions between replica symmetric and non-replica symmetric phases were  proved rigorously. See also \cite{Aukosh17PTRF} for results in the $p=2$ setting with an external field.

Only a few predictions about Parisi measures have been presented in the physics literature. The following discussions represent the most widely acknowledged predictions concerning the structure of Parisi measures. In the model setting of this paper, for $p \geq 3,$ it's conjectured by Gardner \cite{Gardner} that there exists two phase transitions, denoted by $\beta^p_{1}$ and $\beta^p_{2}$. Firstly, the Parisi measure is RS at high temperature, i.e. $\mu_P=\delta_0$, for $0<\beta \leq \beta^p_{1}$. Then   the Parisi measure is 1RSB at relatively low temperature, i.e. $\mu_P=m \delta_0 +(1-m) \delta_{q},$ where $0<m,q<1$, for $\beta^p_{1} < \beta \leq \beta^p_{2}, $ Lastly,  the Parisi measure is FRSB at extremely low temperature, i.e. $\mu_P=m \delta_0 +\nu +(1-m') \delta_q$, where $\nu$ is fully supported in an interval and has a smooth density, for $\beta>\beta^p_{2}$.

 Our main results below verify the existence of the 1RSB phase and the first phase transition $\beta^p_{1}$, for $p \geq 3$:
 \begin{theorem}\label{mainthm}
 For $p \geq 3,$ there exists $\beta^p_1,\epsilon_p>0$ such that the following holds:
 \begin{enumerate}
 \item  the Parisi measure is RS, i.e. $\mu_P=\delta_0$ if and only if if $\beta$ is in $(0, \beta^p_1],$
 \item For $\beta \in (\beta^p_1,  \beta^p_1+\epsilon_p], $ the Parisi measure is 1RSB, i.e. $\mu_P=m \delta_0 +(1-m) \delta_{q}$ for some $0<m,q<1.$
 \end{enumerate}
 
 
 \end{theorem}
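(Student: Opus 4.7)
The plan is to reduce the infinite-dimensional problem $\inf_{\mu \in M[0,1]} \mathcal{P}(\mu)$ to a two-parameter analysis on 1RSB measures and then use the Auffinger--Chen first-order optimality criterion to upgrade to global minimality. For $(m,q) \in [0,1]^2$, set $F_\beta(m,q) := \mathcal{P}(m\delta_0 + (1-m)\delta_q)$. Because the associated distribution function is piecewise constant, two successive Cole--Hopf transformations solve the Parisi PDE explicitly on $[0,q]$ and $[q,1]$, yielding a closed form for $F_\beta$. I would then define
\[
\beta^p_1 := \inf\Bigl\{\beta > 0 : \inf_{(m,q) \in (0,1)^2} F_\beta(m,q) < F_\beta(1,0)\Bigr\},
\]
noting that $F_\beta(1,0) = \mathcal{P}(\delta_0)$. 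For $p \geq 3$, since $\xi''(0) = 0$, Taylor expansion of $F_\beta$ near $(1,0)$ forces $F_\beta \geq \mathcal{P}(\delta_0)$ everywhere for $\beta$ small, so $\beta^p_1 > 0$; standard large-$\beta$ asymptotics give $\beta^p_1 < \infty$.

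For part (1), on $(0, \beta^p_1]$, I would establish $\mu_P = \delta_0$ by combining the Auffinger--Chen first-order criterion at $\delta_0$ with a reduction showing that the infimum of $\mathcal{P}$ can be witnessed inside the 1RSB family. The first ingredient is explicit: Cole--Hopf linearizes the Parisi PDE at $\delta_0$, so the directional-derivative/stability function
\[
\Gamma_{\delta_0,\beta}(q) := \left.\tfrac{d}{d\epsilon}\right|_{\epsilon = 0^+} \mathcal{P}\bigl((1-\epsilon)\delta_0 + \epsilon \delta_q\bigr)
\]
has a closed form in terms of $\xi'(q)$ and one-dimensional Gaussian integrals of $\tanh$. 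The second ingredient is a comparison: given $\mu$ with $\mathcal{P}(\mu) < \mathcal{P}(\delta_0)$, I would extract a two-atom proxy $m'\delta_0 + (1-m')\delta_{q'}$ of no larger Parisi value, via Gaussian interpolation identities applied to the Parisi PDE solution. Combined with $F_\beta \geq \mathcal{P}(\delta_0)$ on $(0,\beta^p_1]$ by construction, this yields $\mu_P = \delta_0$; the converse direction is immediate from the definition of $\beta^p_1$.

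For part (2), for $\beta \in (\beta^p_1, \beta^p_1 + \epsilon_p]$, the definition of $\beta^p_1$ delivers an interior minimizer $(m^*(\beta), q^*(\beta)) \in (0,1)^2$ with $F_\beta(m^*, q^*) < \mathcal{P}(\delta_0)$. A gradient/Hessian computation at $\beta = \beta^p_1$, together with the implicit function theorem, should give a smooth selection $\beta \mapsto (m^*, q^*)$ on a small right-neighborhood of $\beta^p_1$ and strict local minimality of $F_\beta$ there. The candidate $\mu^* := m^* \delta_0 + (1-m^*)\delta_{q^*}$ is then a 1RSB critical point of $\mathcal{P}$; to conclude $\mu_P = \mu^*$, I would verify the Auffinger--Chen inequality globally, namely that the stability function $\Gamma_{\mu^*, \beta}(q)$ is nonnegative on $[0,1]$ with equality exactly at $\{0, q^*\}$.

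The principal obstacle is excluding FRSB and $k$RSB with $k \geq 2$ near $\beta^p_1$, in both parts: the Auffinger--Chen criterion alone bounds neither the support cardinality nor the measure's structure. My strategy is a perturbation-continuity argument anchored at $\beta^p_1$. As $\beta \downarrow \beta^p_1$, one expects $\mu^* \rightharpoonup \delta_0$ weakly and $\Gamma_{\mu^*, \beta}$ to converge locally uniformly to $\Gamma_{\delta_0, \beta^p_1}$, which is nonnegative on $[0,1]$ and, by the critical equations at $\beta^p_1$, vanishes only at $0$ and at the single interior bifurcation point $q^*(\beta^p_1)$. Upgrading this limiting picture to the uniform strict inequality $\Gamma_{\mu^*, \beta}(q) > 0$ off $\{0, q^*(\beta)\}$ for all $\beta \in (\beta^p_1, \beta^p_1 + \epsilon_p]$ is the most delicate step; I expect it to require a quantitative second-order analysis at the bifurcation point $(m^*(\beta^p_1), q^*(\beta^p_1))$, together with regularity and convexity properties of the Parisi PDE solution that preclude additional contact points of $\Gamma_{\mu^*, \beta}$ with zero.
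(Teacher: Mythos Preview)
Your overall architecture---Auffinger--Chen first-order criterion for RS, then a perturbation/continuity argument for 1RSB just past the threshold---is exactly the paper's. But two steps in your plan for part~(1) are genuine gaps.

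First, the ``two-atom proxy'' reduction is both unnecessary and unavailable. It is unnecessary because the first-order criterion at $\delta_0$ is already \emph{sufficient} for global minimality: by strict convexity of $\mathcal{P}$, nonnegativity of all directional derivatives at $\delta_0$ forces $\delta_0$ to be the unique minimizer over all of $M[0,1]$, not merely over the 1RSB family. So once $\Gamma_{\delta_0,\beta}(q)\ge 0$ for every $q$, you are finished; no comparison with general $\mu$ is required. And the reduction is unavailable as stated: the assertion ``any $\mu$ with $\mathcal{P}(\mu)<\mathcal{P}(\delta_0)$ admits a two-atom proxy of no larger Parisi value'' is false at low temperature (the minimizer is FRSB, strictly below every finite-RSB value), and no Gaussian-interpolation identity produces such a proxy.

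Second, and more seriously, the claim that ``the converse direction is immediate from the definition of $\beta_1^p$'' is not correct. With $\beta_1^p$ defined as an infimum, nothing forbids the set $\{\beta:\inf F_\beta<\mathcal{P}(\delta_0)\}$ from being disconnected; you must show that once RS fails it stays failed, i.e.\ that the RS region is an interval $(0,\beta_1^p]$. This is precisely where the paper does its hard analytic work. Writing the stability function as $C_\beta(u)=-\Gamma_{\delta_0,\beta}(u)$, the paper proves that $C_\beta$ has at most two critical points on $(0,\infty)$, via a nontrivial convexity lemma for the auxiliary function $T(u)=u\,\mathbb{E}[\cosh(\sqrt{\xi'(u)}g)]/\mathbb{E}[\tanh^2(\sqrt{\xi'(u)}g)\cosh(\sqrt{\xi'(u)}g)]-1$. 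This structural fact, combined with an explicit computation $\partial_\beta C_\beta>0$ at the unique touching point, yields both uniqueness of $\beta_1^p$ and the interval property. Your plan has no analogue of this lemma; having the closed form of $\Gamma_{\delta_0,\beta}$ is only the starting point. Relatedly, the Taylor-expansion argument you give for $\beta_1^p>0$ is a local statement near $(m,q)=(1,0)$ and does not control $F_\beta$ on all of $[0,1]^2$.

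Your plan for part~(2) is essentially the paper's and is sound. One refinement: as $\beta\downarrow\beta_1^p$ one has $m^*(\beta)\to 1$ while $q^*(\beta)\to q_1^p>0$; the second atom does not drift to the origin, only its mass $1-m^*$ vanishes. The bifurcation is therefore in $m$, not in $q$, and the relevant non-degeneracy is the scalar sign $\partial_m C^1_\beta(m,q)\big|_{m=1}>0$ (which the paper verifies by an explicit calculation) rather than a full Hessian condition in $(m,q)$. With that in hand, the continuity of $f_{\mu^*_\beta}$ in $(\beta,m,q)$ transfers the strict inequality $f_{\mu_P}(u)<0$ on $(0,q_1^p)\cup(q_1^p,1]$ from the boundary case $\beta=\beta_1^p$ to a right-neighborhood, exactly as you outline.
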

 
  \begin{figure}[H] 
\centering 
\includegraphics[width=0.9\textwidth]{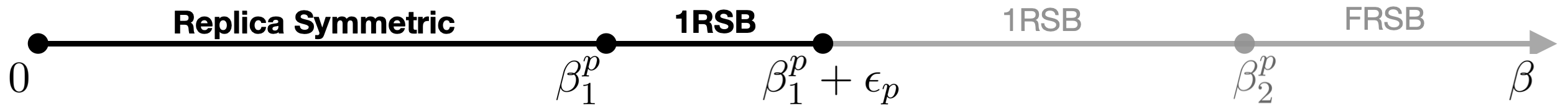} 
\caption{Phase transitions of  $\mu_P$ with respect to  $\beta$. The phases in black are the main results in Theorem \ref{mainthm} and the phases in grey remains unknown.} 
\label{Fig4}
\end{figure}

The relation between the phases of the Parisi measure $\mu_P$ and the inverse temperature $\beta$ is illustrated in Figure \ref{Fig4}. The first phase in black represents the exact phase that the Parisi measure is replica symmetric and the corresponding temperature interval is $(0,\beta^p_1]$.  The second phase in black represents the phase that the Parisi measure is 1RSB, where the corresponding interval is $(\beta^p_1, \beta^p_1+\epsilon_p].$ The first phase in grey is  conjectured to be 1RSB for $\beta \in (\beta^p_1+\epsilon_p,\beta^p_2]$ and the second to be FRSB for $\beta >\beta^p_2.$ Both phases in grey still remains mysterious to us.

The rest of the paper is organized as follows. In Section \ref{section2}, we explain our main results in Theorem \ref{mainthm} in more details. To be more specific, in Section \ref{1stphase}, we present  a more detailed explanation for the phase transition of Parisi measures between RS and 1RSB. In Section \ref{proofidea}, we present the precise characterization of the first critical temperature $\beta^p_1$ in \eqref{boundary}.  Moreover, we introduce the derivation of this characterization and sketch the proof ideas briefly. In section \ref{computational}, we  provide a computational method to find $\beta^p_1$ in Proposition \ref{criterion} and  offer several examples of phase transitions from RS to 1RSB.  In Section \ref{section4}, we present the definition of the Parisi functional and present a general characterization for the Parisi measure. We then prove our main results in Section \ref{section3}. Specifically, we  establish the first part of Theorem \ref{mainthm} and Proposition \ref{criterion} in Section \ref{section3.1}. A crucial ingredient in the proof of the main results is Lemma  \ref{tu}. Lastly, we prove the second part in Section \ref{section3.2}.  

\section{The  phase transition of Parisi measures between RS and 1RSB}\label{section2}

In this section, we explain our main results about the first critical temperature and phase transition in Theorem \ref{mainthm} in more details. 
\subsection{The phase transition between RS and 1RSB}\label{1stphase}

 We denote the distribution of the Parisi measure $\mu_P$ by $\alpha_{\mu_P}$, i.e. $\mu_P([0,s])=\alpha_{\mu_P}(s)$ for $s \in [0,1].$
  Based on our main results in Theorem \ref{mainthm}, distributions of $\mu_P$ at different inverse temperatures $\beta \in (0,\beta^p_1+\epsilon_p]$ with $p \geq 3$ are illustrated in Figure \ref{Fig3}.  
  
   \begin{figure}[H] 
\centering 
\includegraphics[width=0.95\textwidth]{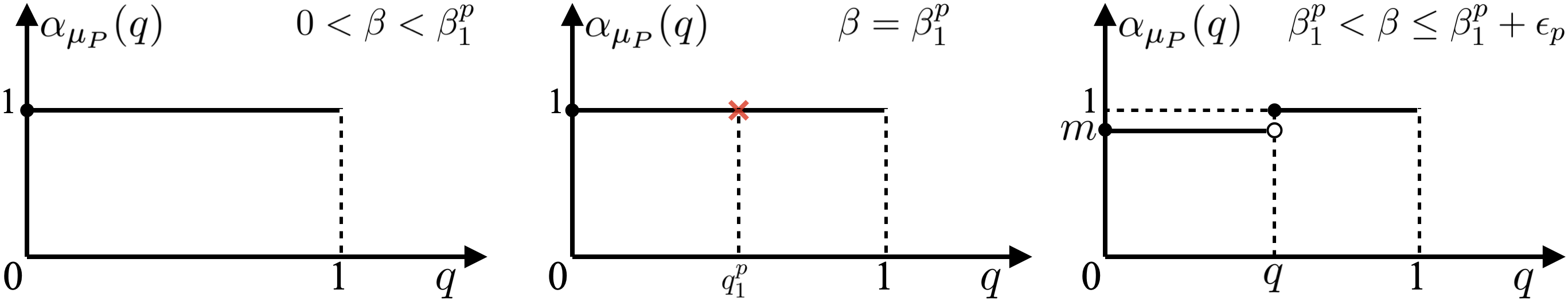} 
\caption{Distributions $\alpha_{\mu_P}$ of  Parisi measures $\mu_P$ for the Ising pure $p$-spin model with $p \geq 3$ and $\beta \in (0,\beta_p+\epsilon_p]$. The  figures from left to right are distributions of $\alpha_{\mu_P}$ for $\beta \in (0,\beta^p_1),$ $\beta=\beta^p_1$ and $\beta \in (\beta^p_1,\beta^p_1+\epsilon_p]$, respectively.} 
\label{Fig3}
\end{figure}
  
  At high temperature region, the Parisi measure is RS, i.e. when $\beta \in (0,\beta^p_1]$, $\mu_P=\delta_0$ and then $\alpha_{\mu_P}(s)=\mathbbm{1}_{[0,1]}(s).$  At the first critical temperature $\beta=\beta^p_1,$ the Parisi measure is on the verge of transitioning from RS to 1RSB phase. Indeed when $\beta=\beta^p_1$, $\mu_P$ can be regarded as the threshold of RS  and 1RSB as follows: 
 \begin{eqnarray*}
 \mu_P&=&\delta_0 \text{ (RS), }\\
 &=&m\delta_0+(1-m)\delta_{q^p_1}, \text{ where } m=1 \text{ and }q^p_1\text{ in \eqref{boundary}} \text{ (1RSB). }
 \end{eqnarray*}
 Here the first equality is the usual way to regard $\mu_P$ as replica symmetric while the second equality is the way to regard it as a special case of 1RSB since $\mu_P$ is about to have two atoms.  Therefore, the transition of Parisi measures is  continuous with respect to the inverse temperature $\beta$. 
 
  The distribution can also be regarded as the threshold of RS and 1RSB: 
 \begin{eqnarray*}
 \alpha_{\mu_P}(s)&=&\mathbbm{1}_{[0,1]}(s) \text{ (RS), }\\
 &=&m \mathbbm{1}_{[0,q^p_1)}(s)+  \mathbbm{1}_{[q^p_1,1]}(s), \text{ where } m=1 \text{ (1RSB). }
 \end{eqnarray*}
  As for the figure of $\alpha_{\mu_P}$ at $\beta=\beta^p_1,$ the only step in $[0,1]$ is about to break into two steps $[0,q^p_1)$ and $[q^p_1,1]$.  
  
  At relatively low temperature, the Parisi measure transitions to 1RSB phase, i.e. when $\beta \in (\beta^p_1,\beta^p_1+\epsilon_p]$, $\mu_P=m\delta_0+(1-m)\delta_{q},$ for some $m,q \in (0,1).$ Then the distribution of $\mu_P$ is $$\alpha_{\mu_P}(s)=m \mathbbm{1}_{[0,q)}(s)+  \mathbbm{1}_{[q,1]}(s).$$

\subsection{Characterization of $\beta^p_1$ and proof ideas}\label{proofidea}

In this section, we provide the precise characterization of $\beta^p_1$ and sketch the proof ideas briefly. 

Consider the following equations with respect to $(\beta,q)$,
\begin{eqnarray}\label{boundary}
\left\{
\begin{array}{lcl}
C_\beta(q):=\frac{ \mathbb{E} [ \cosh (\sqrt{\xi'(q)} g) \log \cosh (\sqrt{\xi'(q)} g) ] }{\mathbb{E} [ \cosh (\sqrt{\xi'(q)} g) ] }-\frac12 \xi'(q) -\frac12 \theta(q)=0 \\
D_\beta(q):=\frac{ \mathbb{E} [ \tanh^2 (\sqrt{\xi'(q)} g)  \cosh (\sqrt{\xi'(q)} g) ] }{\mathbb{E} [ \cosh (\sqrt{\xi'(q)} g) ] }- q=0
\end{array} \right.,
\end{eqnarray}
where $g$ is a standard Gaussian random variable and $\theta(q):=q \xi'(q)-\xi(q)$.
We claim that for $p \geq 3,$ the solution to $\eqref{boundary}$ exists and is unique. We defer the proof to Section \ref{section3.1} and denote this unique solution by $(\beta^p_1,q^p_1)$. Then  the  phase transition between RS and 1RSB occurs when $\beta=\beta^p_1$. 

Now we sketch our proof ideas briefly where the system of equation \eqref{boundary} will  come up naturally.

In Section \ref{section4}, we will introduce a necessary and sufficient criterion to characterize the Parisi measure. To be more specific, for each $\mu \in \text{M}[0,1],$ we define a function $f_\mu(u)$ on $[0,1]$ (see \eqref{criterionfunc2}. 
Then the Parisi measure $\mu_P$ is uniquely characterized by $f_{\mu_P}(u) \leq 0, \forall u \in [0,1]$ and $f_{\mu_P}(u) = 0, \forall u \in \text{supp}(\mu_P)$. 

 In Section \ref{section3.1}, we apply this criterion to the RS case.  It follows from the definition that $f_{\delta_0}(u)=C_\beta(u), $ for $u \in [0,1]$.
 Therefore, $\mu_P=\delta_0$ is the Parisi measure if and only if  $C_{\beta}(u) \leq 0, \forall u \in [0,1].$ Moreover, the phase transition between RS and 1RSB phases will occur when $C_\beta(u)=0$ for some $u \in (0,1).$  Also, we will see that when $u \geq 0$, $D_\beta(u)$ has the same sign as the derivative of $C_\beta(u)$.  Therefore, the phase transitions from RS to 1RSB when $D_\beta(u)=0$ and $C_\beta(u)=0$, i.e. \eqref{boundary} holds. 
 
 We will see that this occurs precisely when $\beta = \beta_1^p$ and $u = q_1^p$.
Indeed when $\beta$ gets larger than $\beta_1^p$, we will see $C_\beta$ is no longer non-positive on $(0, 1)$ and hence the Parisi measure is no longer RS. 
 
 
 In Section \ref{section3.2}, we apply this criterion to the 1RSB case and verify that there exists $\epsilon_p>0,$ such that the Parisi measure $\mu_P$ is 1RSB for $\beta \in (\beta^p_1,\beta^p_1+\epsilon_p].$
 We will see that when $\beta$ is near $\beta_1^p$, the formulas of the RS criterion and the 1RSB criterion are close to each other. This will enable us to use the method of continuity to establish our main results.

\begin{remark}[Comparison to the SK model] When $p=2,$ for any $\beta>0$, there is no $q \in (0,1)$ such that $D_\beta(q)=0$ and $C_\beta(q)=0$ simultaneously.  Therefore the system of equations \eqref{boundary} has no solution in $(0,+\infty) \times (0,1)$ and thus there is no phase transition between RS and 1RSB phases in the SK model. 

The above claim can be seen by arguments similar to those in Section \ref{section3.1}: We first notice $\xi'''(u)\equiv0$. Then by a similar computation to \eqref{similarcomp}, we obtain that $\frac{d^2}{du^2} \{ D_\beta(u) \} <0, \forall u \in [0,+\infty).$ Thus $D_{\beta}(u)$ cannot have two critical points on $(0, 1)$.

\end{remark}

\subsection{A computational criterion to locate $\beta^p_1$}\label{computational}
We now provide the following criterion to determine the range of $\beta^p_1$ for $p \geq 3.$ 
\begin{proposition}\label{criterion}
For $p \geq 3$ and $\beta_1,\beta_2>0$, $\beta_1^p$ lies in  $[\beta_1,\beta_2]$  if the following holds:
\begin{enumerate}
\item there exists $0 <q^1_0<q^2_0<q^1_1<q^2_1<1$ such that 
\begin{enumerate}
\item $D_{\beta_1}(q^1_0) > D_{\beta_1}(q^2_0)>0>D_{\beta_1}(q^1_1) > D_{\beta_1}(q^2_1)$,
\item $C^1_{\beta_1}(q^1_1) - C^2_{\beta_1}(q^2_0)<0,$

where $ C^1_\beta(q)=\frac{ \mathbb{E} [ \cosh (\sqrt{\xi'(q)} g) \log \cosh (\sqrt{\xi'(q)} g) ] }{\mathbb{E} [ \cosh (\sqrt{\xi'(q)} g) ] }$ and  $C^2_\beta(q)=\frac12 \xi'(q) +\frac12 \theta(q).$
\end{enumerate}
\item there exists $0 < q_2< 1$ such that $C_{\beta_2}(q_2)>0.$
\end{enumerate}
\end{proposition}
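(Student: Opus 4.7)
The plan is to leverage the RS characterization recalled in Section \ref{proofidea}: at inverse temperature $\beta$, the Parisi measure equals $\delta_0$ if and only if $C_\beta(u)\le 0$ for every $u\in[0,1]$. Consequently $\beta^p_1$ is precisely the threshold $\beta$ at which $C_\beta\le 0$ on $[0,1]$ fails, and the two bounds $\beta^p_1\leq \beta_2$ and $\beta^p_1\geq \beta_1$ can be proved independently. The upper bound is immediate from condition (2): the existence of $q_2\in(0,1)$ with $C_{\beta_2}(q_2)>0$ shows $\delta_0$ fails the RS criterion at $\beta_2$, so $\mu_P\neq\delta_0$ there, forcing $\beta_2>\beta^p_1$. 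The bulk of the work is the lower bound $\beta_1\leq \beta^p_1$, equivalent to verifying $C_{\beta_1}(u)\le 0$ for all $u\in[0,1]$.

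For the lower bound I first record two monotonicity facts in $q$. A Gaussian tilting argument rewrites $C^1_\beta(q)=\mathbb{E}[\log\cosh(\sqrt{\xi'(q)}\,g+\xi'(q))]$, whose derivative in $q$ is nonnegative by Gaussian integration by parts together with $\xi''(q)\ge 0$; and $C^2_\beta(q)=\tfrac12\,p\beta^2 q^{p-1}+\tfrac12(p-1)\beta^2 q^p$ is manifestly increasing on $[0,1]$. Next, since $C_{\beta_1}(0)=0$ and $C'_{\beta_1}$ has the same sign as $D_{\beta_1}$, any local maximum of $C_{\beta_1}$ in $(0,1)$ sits at a zero of $D_{\beta_1}$ where $D_{\beta_1}$ switches from positive to negative. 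The inequality $D_{\beta_1}(q^2_0)>0>D_{\beta_1}(q^1_1)$ from condition 1(a) then produces such a local maximum $q^*\in(q^2_0,q^1_1)$. The key estimate is
\[
C_{\beta_1}(q^*)=C^1_{\beta_1}(q^*)-C^2_{\beta_1}(q^*)\le C^1_{\beta_1}(q^1_1)-C^2_{\beta_1}(q^2_0)<0,
\]
where the first inequality uses the monotonicities together with $q^2_0<q^*<q^1_1$, and the second is precisely condition 1(b).

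The main obstacle is promoting the bound at $q^*$ to a global bound on $C_{\beta_1}$ throughout $[0,1]$, i.e., ruling out additional local maxima of $C_{\beta_1}$ where it could become positive. I would handle this via a shape analysis of $D_\beta(q)=h(\xi'(q))-q$ with $h(a):=\mathbb{E}[\tanh^2(\sqrt{a}\,g+a)]$, which is increasing, satisfies $h(0)=0$, $h'(0)=1$, and saturates below $1$. Combined with the convexity of $\xi'(q)=p\beta^2 q^{p-1}$ for $p\ge 3$, and the fact that $D_\beta(q)\sim -q$ near $0$ and $D_\beta(1)<0$, one shows $D_{\beta_1}$ admits at most two zeros in $(0,1)$: one where it changes sign from negative to positive (producing the local minimum of $C_{\beta_1}$) and one where it changes sign from positive to negative (the $q^*$ above). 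The strict decreasing inequalities in 1(a), namely $D_{\beta_1}(q^1_0)>D_{\beta_1}(q^2_0)$ and $D_{\beta_1}(q^1_1)>D_{\beta_1}(q^2_1)$, are used exactly to confine these zeros to the intervals $(0,q^1_0)$ and $(q^2_0,q^1_1)$ and to prevent any further sign changes in $(q^2_0,1)$. Once $q^*$ is identified as the unique local maximum of $C_{\beta_1}$ on $(0,1)$, combining $C_{\beta_1}(0)=0$, the monotone decrease of $C_{\beta_1}$ on $[q^*,1]$, and $C_{\beta_1}(q^*)<0$ gives $C_{\beta_1}\le 0$ on $[0,1]$, completing the proof.
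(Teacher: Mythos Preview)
Your overall plan matches the paper's: the upper bound $\beta_1^p\le\beta_2$ follows immediately from condition (2) via the RS criterion, and for the lower bound you locate a local maximum $q^*\in(q_0^2,q_1^1)$ of $C_{\beta_1}$ and bound $C_{\beta_1}(q^*)=C^1_{\beta_1}(q^*)-C^2_{\beta_1}(q^*)\le C^1_{\beta_1}(q_1^1)-C^2_{\beta_1}(q_0^2)<0$ using monotonicity of $C^1_\beta$ and $C^2_\beta$. That estimate is exactly what the paper does.

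The gap is in your treatment of uniqueness of the local maximum. You correctly identify the obstacle but your proposed ``shape analysis'' does not close it. From $h$ increasing with $h(0)=0$, $h'(0)=1$, $h<1$, together with convexity of $\xi'$, it does \emph{not} follow that $q\mapsto h(\xi'(q))-q$ has at most two zeros on $(0,1)$: without curvature control on $h$ one cannot rule out additional oscillations of $D_{\beta_1}$, and composing an increasing $h$ with a convex $\xi'$ gives no convexity or concavity for the composite. Nor do the strict decrease conditions in 1(a) help here: the inequalities $D_{\beta_1}(q_0^1)>D_{\beta_1}(q_0^2)$ and $D_{\beta_1}(q_1^1)>D_{\beta_1}(q_1^2)$ are purely local and say nothing about possible further sign changes of $D_{\beta_1}$ outside $[q_0^1,q_1^2]$.

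In the paper this is handled not by an ad hoc shape argument but by invoking Theorem~\ref{propcd1}(1): $C_\beta$ has either zero or two critical points on $(0,\infty)$, and in the latter case the larger one is the (unique) local maximum. That structural fact is what promotes your estimate at $q^*$ to a global bound, and its proof rests on Lemma~\ref{tu} (convexity of $T(u)$), which is the technical core of the section. Your proposal needs either to cite that result or to supply an equivalent argument; the sketch you give does not.
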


\begin{remark}
In order to approximate $\beta^p_1$ better, we can choose 
 $\beta_1$ and $q^1_0,q^2_0,q^1_1,q^2_1$ satisfying (a) in (1) such that  $C^1_{\beta_1}(q^1_1) - C^2_{\beta_1}(q^2_0)$ is closer to 0.
We can also choose $\beta_2$ such that $\max_{q \in (0,1)} C_{\beta_2}(q)$ is smaller.
\end{remark}

We present the following example as an application of Proposition \ref{criterion}.
\begin{example}
\begin{enumerate}
\item Set $p=3.$  For $\beta_1=1.05$, choose $q^1_0=0.733$, $q^2_0=0.735$,$q^1_1=0.739,$ and $q^2_1=0.740,$ the two conditions (a) and (b) in Proposition \ref{criterion}(1) are then satisfied. For $\beta_2=1.1$ and $q_2=0.9,$ we obtain that $C_{\beta_2}(q_2)>0.$ By Proposition \ref{criterion}, the range of $\beta^3_1$ is then $[1.05, 1.1].$
\item Set $p=20.$ For $\beta_1=1.15$, choose $q^1_0=0.9999992$, $q^2_0=0.9999994$, $q^1_1=0.9999997,$ and $q^2_1=0.9999999,$ the two conditions (a) and (b) in Proposition \ref{criterion}(1) are then satisfied. For $\beta_2=1.2$ and $q_2=0.99,$ we obtain that $C_{\beta_2}(q_2)>0.$ By Proposition \ref{criterion}, the range of $\beta^p_1$ is then $[1.15, 1.2].$
\end{enumerate}
\end{example}

\section{Characterization of the Parisi measure $\mu_P$}\label{section4}
In order to prove our main results in Theorem \ref{mainthm}, we  introduce  the Parisi functional and characterize the Parisi measure in more details.
We first  present the well-known Parisi functional $\mathcal{P} (\mu)$ as follows. For any $\mu \in M[0,1]$, $\mathcal{P}(\mu)$ is defined as  
\begin{eqnarray} \label{Parisifunctional}	
\mathcal{P}(\mu)= \log 2 + \Phi_\mu (0,0)-\frac12 \int^1_0 \alpha_\mu(s) s \xi''(s) ds,
\end{eqnarray}
where $\Phi_\mu$ is the solution to the Parisi PDE on $[0,1] \times \mathbb{R}$
\begin{equation}\label{ParisiPDE}  \left\{
\begin{array}{lcl}
\partial_u \Phi_\mu(x,u)=-\frac{\xi''(u)}{2} \Big[ \partial_{xx} \Phi_\mu(x,u) +\alpha_\mu(u) \big( \partial_x  \Phi_\mu(x,u)  \big)^2 \Big] .   \\
\Phi_\mu(x,1)=\log \cosh x .
\end{array} \right. \end{equation} and $\alpha_\mu$ is the distribution function of $\mu \in M[0,1].$

Now in order to prove our main results, we will need a criterion to characterize the structure of the Parisi measure $\mu_P.$ For any $\mu \in M[0,1],$ we first define
\begin{eqnarray*}
\Gamma_\mu(u)=\mathbb{E} \big( \partial_x \Phi_\mu( B(\xi'(u)),u) \big)^2 \exp W_\mu(u)
\end{eqnarray*}
where $B=(B(t))_{t \geq 0}$ is a standard Brownian motion and 
\begin{eqnarray*}
W_\mu(u)=\int^u_0 \big( \Phi_\mu( B(\xi'(u))  ,u) - \Phi_\mu( B(\xi'(s))  ,s) \big)d \mu(s), u \in [0,1].
\end{eqnarray*}

We then consider the following two functions
\begin{eqnarray}\label{criterionfunc1}
F_\mu(u)= \Gamma_\mu(u)-u
\end{eqnarray}
and 
\begin{eqnarray}\label{criterionfunc2}
f_\mu(u)= \int^u_0 \frac{\xi''(s)}{2}  \cdot F_\mu(s) ds.
\end{eqnarray}

We  then prove the following necessary and sufficient criterion for $\mu \in M[0,1]$ to be the Parisi measure of the Ising pure $p$-spin glass models without external fields. A similar criterion was obtained by Talagrand\cite{talagrand2006parisi-b} and later utilized  by Auffinger and Chen \cite{AChen15PTRF}.
\begin{theorem}\label{thmcriterion}
$\mu_P$ is the Parisi measure if and only if the following two conditions are satisfied:
\begin{enumerate}
\item  $ f_{\mu_P}(u) \leq 0$, for $0 \leq u \leq 1.$
\item $\mu_P( \{ u \in [0,1]:f_{\mu_P}(u)=0 \})=1.$
\end{enumerate}
\end{theorem}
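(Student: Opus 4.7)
The plan is to derive both directions from the first-order optimality conditions for the strictly convex Parisi functional $\mathcal{P}$ on $M[0,1]$. By the Auffinger--Chen convexity theorem (which also underlies uniqueness of $\mu_P$), a measure $\mu_P \in M[0,1]$ is the global minimizer if and only if the one-sided Gateaux derivative
\begin{equation*}
\Lambda(\mu_P,\nu) := \lim_{t \downarrow 0} \frac{\mathcal{P}((1-t)\mu_P + t\nu) - \mathcal{P}(\mu_P)}{t}
\end{equation*}
is non-negative for every probability measure $\nu \in M[0,1]$. The main task therefore reduces to computing $\Lambda(\mu,\nu)$ in closed form and matching it against an integral of $f_\mu$ with respect to $\nu - \mu$.

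I would first differentiate the Parisi PDE along $\mu_t = (1-t)\mu + t\nu$ to obtain a linear backward parabolic equation for $\psi := \partial_t \Phi_{\mu_t}|_{t=0}$, with source term $-\tfrac{\xi''(u)}{2}(\alpha_\nu - \alpha_\mu)(u)(\partial_x \Phi_\mu)^2$ and zero terminal data at $u=1$. A Feynman--Kac representation expresses $\psi(0,0)$ as an expectation over a diffusion with drift $\xi''(u)\alpha_\mu(u)\partial_x \Phi_\mu$ and diffusion coefficient $\sqrt{\xi''(u)}$. Applying It\^o's formula to $\Phi_\mu(B(\xi'(\cdot)),\cdot)$ and using the Parisi PDE to rewrite $\partial_u \Phi_\mu + \tfrac{\xi''(u)}{2}\partial_{xx}\Phi_\mu$ identifies the Girsanov exponent precisely with $W_\mu(u)$, so that after removing the drift one recovers exactly $\Gamma_\mu(u) = \mathbb{E}[(\partial_x\Phi_\mu(B(\xi'(u)),u))^2 \exp W_\mu(u)]$. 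Combined with the elementary derivative of the explicit term $-\tfrac{1}{2}\int_0^1 \alpha_{\mu_t}(s) s\xi''(s)\,ds$, this yields
\begin{equation*}
\Lambda(\mu,\nu) = \int_0^1 \frac{\xi''(u)}{2} F_\mu(u) \bigl(\alpha_\nu(u) - \alpha_\mu(u)\bigr)\,du.
\end{equation*}

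A Fubini rearrangement, writing $\alpha_\nu(u) - \alpha_\mu(u) = (\nu - \mu)([0,u])$ and exploiting that $\nu - \mu$ has total mass zero, converts this to $-\int_0^1 f_\mu(s)\,d(\nu - \mu)(s)$. The condition $\Lambda(\mu_P,\nu) \geq 0$ for every probability $\nu$ is then equivalent to saying that $\mu_P$ concentrates on the set where $f_{\mu_P}$ attains its supremum, with that supremum equal to $\int f_{\mu_P}\,d\mu_P$. Since $f_\mu(0) = 0$ by construction, and the zero-field Parisi measure places positive mass at $0$ (inherited from the evenness of $\Phi_\mu(\cdot,1) = \log\cosh$, which pins a critical point at $x = 0$), this common supremum must equal $0$, giving both (1) and (2). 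The converse direction is a direct substitution: if (1) and (2) hold for some $\mu$, then $\Lambda(\mu,\nu) = -\int f_\mu\,d\nu \geq 0$ for every $\nu$, and convexity promotes $\mu$ to the unique minimizer $\mu_P$.

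The hardest part is the sensitivity step, namely rigorously justifying that one may differentiate $\Phi_\mu(0,0)$ in the measure parameter and apply Girsanov's theorem to recover the weight $\exp W_\mu$. This requires $C^{2,1}$ regularity together with uniform Lipschitz bounds on $\partial_x \Phi_\mu$ and exponential integrability of $W_\mu$, all of which are standard for the Parisi PDE and have been developed in the works of Talagrand and Auffinger--Chen cited in the excerpt, so they can be imported with only minor adaptation.
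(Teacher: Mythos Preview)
Your approach is essentially identical to the paper's: compute the Gateaux derivative of $\mathcal{P}$ along $\mu_\theta = (1-\theta)\mu_P + \theta\nu$ to obtain $\int_0^1 \tfrac{\xi''}{2}F_{\mu_P}(\alpha_\nu - \alpha_{\mu_P})\,du$, rewrite via Fubini as $-\int f_{\mu_P}\,d(\nu - \mu_P)$, use $0 \in \operatorname{supp}(\mu_P)$ to pin the supremum of $f_{\mu_P}$ at zero, and use convexity for the converse. The only differences are cosmetic: the paper cites Talagrand's Lemma~3.7 for the derivative formula rather than sketching the Feynman--Kac/Girsanov argument, and it cites \cite{AChen15PTRF} directly for $0 \in \operatorname{supp}(\mu_P)$ rather than appealing heuristically to the evenness of $\log\cosh$ (which by itself does not obviously give the claim, so you should cite that result explicitly).
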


\begin{proof}[Proof of Theorem \ref{thmcriterion}]
Fix $\mu_0 \in M[0,1]$ and consider any $\mu \in M[0,1]$. For $\theta \in [0,1],$ define $\mu_\theta=(1-\theta)\mu_0+\theta \mu.$  By a similar computation to \cite[Lemma 3.7]{{talagrand2006parisi-b}}, we obtain that
\begin{eqnarray*}
\frac{d}{d \theta}\big\{ \mathcal{P}(\mu_\theta) \big\}_{ |_{\theta =0} }&=& \frac12 \int ^1_0 \xi''(s) F_{\mu_0}(s) \big( \alpha_\mu(s)- \alpha_{\mu_0}(s)\big) ds.
\end{eqnarray*}
Here for any $\nu \in M[0,1]$, $\alpha_\nu$ is its distribution, i.e. $\alpha_\nu (s)=\nu([0,s]), \forall s \in [0,1].$

Note that the following inequality
\begin{eqnarray}\label{dP}
\frac{d}{d \theta}\big\{ \mathcal{P}(\mu_\theta) \big\}_{ |_{\theta =0} } \geq 0,
\end{eqnarray}
is equivalent to
\begin{eqnarray*}
0 &\leq & \frac12 \int ^1_0 \xi''(s) F_{\mu_0}(s)  \alpha_\mu(s) ds-\frac12 \int ^1_0 \xi''(s) F_{\mu_0}(s)   \alpha_{\mu_0}(s) ds\\
&=&  \frac12 \int ^1_0 \int^s_0 \xi''(s) F_{\mu_0}(s) d\mu(u)  ds-  \frac12 \int ^1_0 \int^s_0 \xi''(s) F_{\mu_0}(s)  d\mu_0(u)  ds\\
&=&  \frac12 \int ^1_0 \int^1_u \xi''(s) F_{\mu_0}(s) ds d\mu(u)  -\frac12 \int ^1_0 \int^1_u \xi''(s) F_{\mu_0}(s) ds  d\mu_0(u)  
\end{eqnarray*}
and then
\begin{eqnarray*}
 \int ^1_0 \int^1_u \frac{\xi''(s)}{2} \cdot F_{\mu_0}(s) ds d\mu(u)  \geq  \int ^1_0 \int^1_u \frac{\xi''(s)}{2} \cdot F_{\mu_0}(s) ds  d\mu_0(u)  .  
\end{eqnarray*}
Therefore, it yields that \eqref{dP} is equivalent to 
\begin{eqnarray}\label{Condition1}
\int ^1_0 f_{\mu_0}(u)  d\mu(u)  \leq  \int ^1_0 f_{\mu_0}(u)   d\mu_0(u).  
\end{eqnarray}
Therefore the statement that \eqref{dP} holds for any $\mu \in M[0,1]$  is equivalent to that
\begin{eqnarray*}
 \mu_0\big( \big \{ u \in [0,1] |  f_{\mu_0}(u) =\sup_{t \in [0,1]}  f_{\mu_0}(t) \big\} \big)=1.
\end{eqnarray*}

If  $\mu_0$ is the Parisi measure $\mu_P$ that minimizes the Parisi functional $\mathcal{P},$ then \eqref{dP} holds  for any $\mu \in M[0,1]$.
 It was shown   \cite[Theorem 1]{AChen15PTRF}, that in the absence of external field, the support of any Parisi measure contains the origin. It then yields that 
\begin{eqnarray*}
0=f_{\mu_P}(0) =\sup_{t \in [0,1]}  f_{\mu_P}(t).
\end{eqnarray*}
Then the statement that  \eqref{dP} holds for any $\mu \in M[0,1],$ is equivalent to say that
\begin{enumerate}
\item $f_{\mu_0}(u)\leq 0, \forall u \in [0,1].$
\item $f_{\mu_0}(u)= 0$, for any $u$ in the support of $\mu_0.$
\end{enumerate}

Now we assume that  \eqref{dP} holds for any $\mu \in M[0,1].$ For any $\epsilon > 0$, there exists some $\delta > 0$ such that $\mathcal{P}(\mu_\theta) -\mathcal{P}(\mu_0) \geq -\epsilon \theta$ for any $0 < \theta < \delta.$ The convexity of $\mathcal{P}$ yields that
$$\mathcal{P}(\mu_\theta)\leq (1-\theta) \mathcal{P}(\mu_0)+\theta \mathcal{P}(\mu),$$
and then 
$$(1-\theta) \mathcal{P}(\mu_0)+\theta \mathcal{P}(\mu) \geq \mathcal{P}(\mu_0)-\epsilon \theta.$$
Therefore, we obtain that
$$\theta \big( \mathcal{P}(\mu)-\mathcal{P}(\mu_0) \big)=\theta  \mathcal{P}(\mu)+(1-\theta)\mathcal{P}(\mu_0) - \mathcal{P}(\mu_0)\geq-\epsilon \theta.$$
Since $\mathcal{P}(\mu) \geq \mathcal{P}(\mu_0) -\epsilon$ for all $\epsilon>0$ and $\mu \in M[0,1],$ $\mu_0$ is the unique minimizer of the Parisi functional, i.e. $\mu_0=\mu_P.$

\end{proof}

\section{Proof of Theorem \ref{mainthm}}\label{section3}

In this section, we will prove our main results in Theorem \ref{mainthm}. We establish the first part of Theorem \ref{mainthm} in Section \ref{section3.1} and the second part in Section \ref{section3.2}.

\subsection{Proof of Replica Symmetry}\label{section3.1}
In this section, we will prove that the Parisi measure is replica symmetric only at high temperature. To be more specific, we show that  $\mu_P=\delta_0$ if and only if $0 \leq \beta \leq \beta^p_1$.


Recall the two functions $C_\beta(u)$ and $D_\beta(u)$ in \eqref{boundary}. As a corollary of Theorem \ref{thmcriterion}, we introduce the following criterion for $\mu_P$ to be replica symmetric:
\begin{proposition}[RS criterion]\label{rscriterion}
The Parisi measure $\mu_P$ is replica symmetric if and only if for  $q \in [0,1]$ with $D_\beta(q)=0$, it holds that $C_\beta(q) \leq 0.$

\end{proposition}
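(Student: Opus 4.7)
The plan is to specialize the general criterion of Theorem~\ref{thmcriterion} to $\mu = \delta_0$, verify by direct computation that $f_{\delta_0} = C_\beta$ on $[0,1]$, and then reduce the global inequality $C_\beta \leq 0$ to its values at the zeros of $D_\beta$ via an elementary shape analysis of $C_\beta$. Since $\delta_0$ puts all its mass at $0$ and $f_{\delta_0}(0) = 0$ automatically, condition (2) of Theorem~\ref{thmcriterion} is free, so $\mu_P = \delta_0$ if and only if $f_{\delta_0}(u) \leq 0$ for every $u \in [0,1]$.

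To identify $f_{\delta_0}$ with $C_\beta$, first observe that for $\mu = \delta_0$ the distribution function $\alpha_\mu$ equals $1$ on $[0,1]$, so the Parisi PDE \eqref{ParisiPDE} becomes $\partial_u \Phi = -\tfrac{\xi''(u)}{2}[\partial_{xx}\Phi + (\partial_x \Phi)^2]$ with terminal condition $\log\cosh x$. The Cole-Hopf substitution $\Phi = \log v$ reduces this to the backward heat equation $\partial_u v = -\tfrac{\xi''(u)}{2}\partial_{xx}v$, whose solution $v(x,u) = \mathbb{E}\cosh(x + g\sqrt{\xi'(1) - \xi'(u)}) = \cosh(x)\,e^{(\xi'(1)-\xi'(u))/2}$ yields $\Phi_{\delta_0}(x,u) = \log\cosh x + \tfrac{1}{2}(\xi'(1)-\xi'(u))$ and $\partial_x \Phi_{\delta_0} = \tanh x$. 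Plugging these into the definitions of $W_\mu$ and $\Gamma_\mu$ and using $\mathbb{E}\cosh(g\sqrt{\xi'(u)}) = e^{\xi'(u)/2}$ gives
\begin{equation*}
e^{W_{\delta_0}(u)} = \frac{\cosh(B(\xi'(u)))}{\mathbb{E}\cosh(B(\xi'(u)))}, \qquad \Gamma_{\delta_0}(u) = \frac{\mathbb{E}[\tanh^2(\sqrt{\xi'(u)}\,g)\cosh(\sqrt{\xi'(u)}\,g)]}{\mathbb{E}[\cosh(\sqrt{\xi'(u)}\,g)]},
\end{equation*}
so $F_{\delta_0}(u) = D_\beta(u)$. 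Combined with $C_\beta(0) = 0$, the identity $f_{\delta_0} = C_\beta$ then follows once we verify $C_\beta'(u) = \tfrac{\xi''(u)}{2}D_\beta(u)$; this is a short calculation using the heat identity $\tfrac{d}{du}\mathbb{E}F(g\sqrt{\xi'(u)}) = \tfrac{\xi''(u)}{2}\mathbb{E}F''(g\sqrt{\xi'(u)})$ applied to $F(x) = \cosh(x)\log\cosh(x)$ (for which $F''(x) = \cosh(x)[\log\cosh(x) + 1 + \tanh^2(x)]$), together with $\theta'(u) = u\xi''(u)$.

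It remains to pass from ``$C_\beta \leq 0$ on $[0,1]$'' to the stated condition at zeros of $D_\beta$. The forward direction is trivial. For the converse, note that $\xi''(u) > 0$ on $(0,1]$ when $p \geq 3$, so interior critical points of $C_\beta$ on $(0,1]$ coincide exactly with interior zeros of $D_\beta$. The maximum of $C_\beta$ on $[0,1]$ is therefore attained at $u = 0$, at $u = 1$, or at some interior critical point. The endpoint $u = 0$ is harmless because $C_\beta(0) = 0$, and the endpoint $u = 1$ is excluded by
\begin{equation*}
D_\beta(1) = \frac{\mathbb{E}[(\tanh^2(\sqrt{\xi'(1)}\,g) - 1)\cosh(\sqrt{\xi'(1)}\,g)]}{\mathbb{E}[\cosh(\sqrt{\xi'(1)}\,g)]} = -\frac{\mathbb{E}[1/\cosh(\sqrt{\xi'(1)}\,g)]}{\mathbb{E}[\cosh(\sqrt{\xi'(1)}\,g)]} < 0,
\end{equation*}
so $C_\beta'(1) < 0$ and the maximum is not attained at $u = 1$. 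Consequently $\max_{[0,1]} C_\beta$ equals either $0$ or $C_\beta(q^*)$ for some $q^* \in (0,1)$ with $D_\beta(q^*) = 0$, and the hypothesis delivers $C_\beta \leq 0$ throughout.

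The main technical obstacle is the identification $f_{\delta_0} = C_\beta$ in the second step: collapsing the Brownian functional $\Gamma_{\delta_0}$ via Cole-Hopf to the closed-form Gaussian ratio, and then matching the antiderivative of $\tfrac{\xi''}{2}D_\beta$ with the explicit formula for $C_\beta$. Once $C_\beta'(u) = \tfrac{\xi''(u)}{2}D_\beta(u)$ is in hand, the remaining shape analysis of $C_\beta$ at the endpoints and critical points is routine.
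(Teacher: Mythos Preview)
Your proposal is correct and follows essentially the same approach as the paper: specialize Theorem~\ref{thmcriterion} to $\mu=\delta_0$, solve the Parisi PDE via Cole--Hopf to obtain $\Phi_{\delta_0}(x,u)=\log\cosh x+\tfrac12(\xi'(1)-\xi'(u))$, compute $\Gamma_{\delta_0}$ to identify $F_{\delta_0}=D_\beta$, and then verify $C_\beta'=\tfrac{\xi''}{2}D_\beta$ so that $f_{\delta_0}=C_\beta$. The only cosmetic differences are that the paper phrases the derivative computation as Gaussian integration by parts rather than the heat identity, and that you spell out the endpoint check $D_\beta(1)<0$ (which the paper records separately in Theorem~\ref{propcd1}(4)) to justify why the maximum of $C_\beta$ on $[0,1]$ is attained either at $0$ or at an interior zero of $D_\beta$; the paper's proof compresses this step into a single sentence.
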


 We introduce the properties of $C_\beta(u)$ and $D_\beta(u)$ as follows and defer their proof to the end of this section:
\begin{theorem}\label{propcd1}
\begin{enumerate}
 \item $C_\beta(u)$ has either zero or two critical points in $(0,+\infty)$ and $D_\beta(u)$ has the same sign as $\frac{d}{du} \big\{ C_\beta(u)\big\}$. Moreover, if $C_\beta(u)$ has two critical points  $u_2>u_1>0,$ then $u_1$ is a local minimum point and $u_2$ is a local maximum point.
 \item For any $\beta>0$, $\lim_{u \rightarrow +\infty} C_\beta(u) = -\infty$.  Also, $ \lim_{\beta \rightarrow + \infty} C_\beta(1) = +\infty.$ 
 \item For any $\beta>0,$ $C_\beta(0)=0$ and there exists $\epsilon>0$ such that $C_\beta(u)$ is strictly decreasing in $(0,\epsilon).$
 \item $ D_0(u)<0,$ for any $u >0$ and  $D_\beta(1)<0$.
 \end{enumerate}
\end{theorem}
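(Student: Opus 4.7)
The plan is to derive all four parts from a single key identity,
$$
C_\beta'(u) \;=\; \frac{\xi''(u)}{2}\,D_\beta(u), \qquad (\star)
$$
which is the analog in the present setting of a standard relation in Parisi theory. To prove $(\star)$ I would first use the Cameron--Martin shift (equivalently, the identity $\cosh(\sqrt{t}g)/\mathbb{E}\cosh(\sqrt{t}g) = e^{-t/2}\cosh(\sqrt{t}g)$) to rewrite
$$
C_\beta(q)=\mathbb{E}\log\cosh\!\bigl(\xi'(q)+\sqrt{\xi'(q)}\,g\bigr)-\tfrac{1}{2}\xi'(q)-\tfrac{1}{2}\theta(q), \qquad D_\beta(q)=\mathbb{E}\tanh^2\!\bigl(\xi'(q)+\sqrt{\xi'(q)}\,g\bigr)-q,
$$
then differentiate $C_\beta$ in $q$ and apply Gaussian integration by parts. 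The derivative reduces to $\xi''(q)\bigl[\mathbb{E}\tanh(Z)-\tfrac{1}{2}\mathbb{E}\tanh^2(Z)-\tfrac{q}{2}\bigr]$ with $Z\sim\mathcal{N}(\xi'(q),\xi'(q))$. The auxiliary identity $\mathbb{E}\tanh(Z)=\mathbb{E}\tanh^2(Z)$ for $Z\sim\mathcal{N}(t,t)$, itself a direct consequence of the shift combined with $\mathbb{E}\sinh(\sqrt{t}g)=0$, collapses the bracket to $\tfrac{1}{2}D_\beta(q)$ and yields $(\star)$. Since $\xi''(u)>0$ for $u>0$, the sign comparison in part (1) follows.

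Parts (2), (3), and (4) are then elementary. For (3): $C_\beta(0)=\log\cosh 0=0$; since $\xi''(0)=0$ for $p\ge 3$ one has $D_\beta(0)=0$ and $D_\beta'(0)=-1<0$, so $D_\beta(u)<0$ for small $u>0$, and $(\star)$ with $\xi''(u)>0$ gives the local strict decrease of $C_\beta$. For (4): $D_0(u)=\mathbb{E}\tanh^2(0)-u=-u$, and $D_\beta(1)<0$ since $\tanh^2(Z)<1$ almost surely for the non-degenerate Gaussian $Z$. For (2): the bound $|\log\cosh x|\le |x|$ gives $C_\beta(u)\le\tfrac{1}{2}\xi'(u)+O(\sqrt{\xi'(u)})-\tfrac{1}{2}\theta(u)$, and the $\theta(u)=(p-1)\beta^2 u^p$ term dominates at infinity whenever $p\ge 3$; for $\lim_{\beta\to\infty}C_\beta(1)=+\infty$, use $\log\cosh x=|x|-\log 2+o(1)$ as $|x|\to\infty$, noting that $Z=p\beta^2+\sqrt{p\beta^2}\,g$ is positive except on an exponentially small event, so $\mathbb{E}\log\cosh Z=p\beta^2-\log 2+o(1)$, which simplifies $C_\beta(1)$ to $\tfrac{1}{2}\beta^2-\log 2+o(1)\to+\infty$.

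The hard part is the ``at most two critical points'' clause in part (1). Via $(\star)$ this reduces to showing $D_\beta(q)=m(\xi'(q))-q$ has at most two zeros on $(0,\infty)$, where $m(s):=\mathbb{E}\tanh(s+\sqrt{s}\,g)$. Under $s=\xi'(q)=p\beta^2 q^{p-1}$ the zeros are precisely the fixed points of $s\mapsto p\beta^2 m(s)^{p-1}$, equivalently the zeros of $\psi(s)-p\beta^2$ where $\psi(s):=s/m(s)^{p-1}$. Since $m(s)\sim s$ near $0$ and $m(s)\uparrow 1$, for $p\ge 3$ one has $\psi(0^+)=\psi(\infty)=+\infty$, so it suffices to prove $\psi$ is unimodal with a unique minimizer, equivalently that the function $L(s):=sm'(s)/m(s)$ hits $1/(p-1)$ exactly once. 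The endpoint values $L(0)=1$ and $L(\infty)=0$ are direct. My plan is to establish strict monotonicity of $L$ on $(0,\infty)$ using the explicit Gaussian-shift identities $1-m(s)=e^{-s/2}\,\mathbb{E}\,\mathrm{sech}(\sqrt{s}g)$ and $m'(s)=e^{-s/2}\,\mathbb{E}\,\mathrm{sech}^3(\sqrt{s}g)$, which rewrite $L$ as a ratio of positive Gaussian integrals on which monotonicity should follow from a covariance/FKG-type estimate on the $\mathrm{sech}$-tilted measure; this is the natural replacement, when $\xi'''\not\equiv 0$, for the concavity-of-$D_\beta$ argument that worked in the $p=2$ remark. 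Once unimodality of $\psi$ is proven, the local-min/local-max structure at $u_1<u_2$ follows from $C_\beta''=(\xi'''/2)D_\beta+(\xi''/2)D_\beta'$: at a critical point $D_\beta=0$ so $C_\beta''$ has the sign of $D_\beta'$, positive at the first zero of $D_\beta$ and negative at the second.
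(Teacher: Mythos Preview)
Your derivation of the key identity $C_\beta'(u)=\tfrac{\xi''(u)}{2}D_\beta(u)$ via the Cameron--Martin shift and the auxiliary relation $\mathbb{E}\tanh(Z)=\mathbb{E}\tanh^2(Z)$ for $Z\sim\mathcal{N}(t,t)$ is correct and is a clean alternative to the paper's direct Gaussian--integration--by--parts calculation. Parts (2), (3), (4) are handled correctly; your asymptotic arguments are essentially equivalent to the paper's explicit error-function bounds.

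The genuine gap is in part (1). You have correctly reduced ``at most two critical points'' to the unimodality of $\psi(s)=s/m(s)^{p-1}$, equivalently to the statement that $L(s)=sm'(s)/m(s)$ hits $1/(p-1)$ exactly once, and you propose to prove the stronger claim that $L$ is strictly decreasing on $(0,\infty)$. But you have not proved this, and the phrase ``should follow from a covariance/FKG-type estimate on the $\mathrm{sech}$-tilted measure'' is not a proof plan: once you unwind the identities, $L(s)=s\,a_{-3}/(a_1-a_{-1})$ with $a_k=\mathbb{E}\cosh^k(\sqrt{s}g)$, and the sign of $L'$ reduces to an inequality mixing $a_1,a_{-1},a_{-3},a_{-5}$ that does not factor through any obvious correlation inequality. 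The paper attacks the same difficulty by a different route --- it introduces $T(u)=u/m(\xi'(u))-1$ and proves $T$ is \emph{convex} in $u$ (Lemma~\ref{tu}) --- and even that requires computing $T''$ explicitly, reducing to a pointwise inequality in $x\in\mathbb{R}$, and ultimately invoking Sturm's theorem on a degree-five polynomial. Your monotonicity-of-$L$ claim is a $p$-independent statement and would be more elegant if true, but you should expect a computation of comparable depth; as written, this step is missing.

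One minor remark: note that $L$ strictly decreasing and $T$ convex are \emph{not} equivalent. Writing $T'(u)=\bigl(1-(p-1)L(s)\bigr)/m(s)$ with $s=\xi'(u)$, convexity of $T$ amounts to $-(p-1)sL'(s)>L(s)\bigl(1-(p-1)L(s)\bigr)$, which is automatic from $L'<0$ only on the range where $(p-1)L\ge 1$. So neither approach subsumes the other; they are genuinely different reductions of the same conclusion, and whichever you choose, the analytic core still has to be done.
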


For any $u \geq 0,$ we define $$T(u):= \frac{u \mathbb{E} [ \cosh (\sqrt{\xi'(u)} g) ] }{ \mathbb{E} [ \tanh^2 (\sqrt{\xi'(u)} g)  \cosh (\sqrt{\xi'(u)} g) ] } -1.$$ 
The following property of $T(u)$ is pivotal to the proof of Theorem \ref{propcd1}. We will prove it at the end of this section.
\begin{lemma}\label{tu}
$T(u)$ is a convex function in $(0,+\infty).$
\end{lemma}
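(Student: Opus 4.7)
The plan is to reduce the convexity of $T$ to a single pointwise inequality for the scalar function
\[
\Psi(\alpha)\;:=\;\mathbb{E}\bigl[\tanh^{2}\bigl(\alpha+\sqrt{\alpha}\,g\bigr)\bigr],\qquad g\sim N(0,1),
\]
and then to establish that inequality via Gaussian integration by parts. First, the completing-the-square identity $\mathbb{E}[h(g)\cosh(ag)]/\mathbb{E}[\cosh(ag)]=\tfrac{1}{2}(\mathbb{E}[h(g+a)]+\mathbb{E}[h(g-a)])$, applied to the even function $h(g)=\tanh^{2}(ag)$ with $a=\sqrt{\xi'(u)}$, gives
\[
T(u)+1\;=\;\frac{u}{\Psi(\xi'(u))}.
\]
A Nishimori-type identity (consequence of the reflection $p_{\alpha,\alpha}(-y)/p_{\alpha,\alpha}(y)=e^{-2y}$ for the density of $\alpha+\sqrt{\alpha}g$) yields the equivalent linear form $\Psi(\alpha)=\mathbb{E}[\tanh(\alpha+\sqrt{\alpha}g)]$, which is easier to differentiate.

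Using $\xi'(u)=p\beta^{2}u^{p-1}$, and hence $u\xi'''(u)=(p-2)\xi''(u)$ so that $2\xi''(u)+u\xi'''(u)=p\xi''(u)$, a direct twofold differentiation of $T(u)=u/\Psi(\xi'(u))-1$ reduces $T''(u)\geq 0$ on $(0,\infty)$ to the pointwise bound
\[
2\bigl(\Psi'(\alpha)\bigr)^{2}\;-\;\Psi(\alpha)\,\Psi''(\alpha)\;\geq\;\frac{p}{(p-1)\alpha}\,\Psi(\alpha)\,\Psi'(\alpha)\qquad (\spadesuit)
\]
for every $\alpha=\xi'(u)>0$. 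Applying Stein's lemma to the linear form then produces the clean representations
\[
\Psi(\alpha)=1-e^{-\alpha/2}H_{0},\qquad \Psi'(\alpha)=e^{-\alpha/2}H_{1},\qquad \Psi''(\alpha)=e^{-\alpha/2}(4H_{1}-6H_{2}),
\]
where $H_{k}:=\mathbb{E}[\operatorname{sech}^{2k+1}(\sqrt{\alpha}\,g)]$ for $k=0,1,2$. This turns $(\spadesuit)$ into an explicit inequality in the three sech-moments $H_{0},H_{1},H_{2}$ and $e^{-\alpha/2}$, weighted by $1/\alpha$.

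Both boundary regimes are direct: as $\alpha\downarrow 0$, Taylor expansion of the $H_{k}$ produces a leading gap $(p-2)/(p-1)>0$, valid precisely because $p\geq 3$; as $\alpha\uparrow\infty$ the negative term $-\Psi\Psi''$ dominates, using $H_{k}\sim\sqrt{2/(\pi\alpha)}\int_{0}^{\infty}\operatorname{sech}^{2k+1}(x)\,dx$ and the limiting ratio $3H_{2}/H_{1}\to 9/4>2$. For intermediate $\alpha$ I would combine the Cauchy--Schwarz bound $H_{1}^{2}\leq H_{0}H_{2}$ (which controls $\Psi''$ from above) with one further Stein integration to re-express the residual $\phi(\alpha):=2(\Psi')^{2}-\Psi\Psi''-\tfrac{p}{(p-1)\alpha}\Psi\Psi'$ as a manifestly non-negative Gaussian integral. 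This last step is the main obstacle: the three sech-moments remain comparable across a wide intermediate range of $\alpha$, so standard convex inequalities leave a residual $O(1/\alpha)$ term that must be absorbed by the coefficient $p/(p-1)\leq 3/2$; the tightness of this absorption reflects the sharpness of the hypothesis $p\geq 3$, and I expect the bulk of the technical work of the lemma to reside in this step.
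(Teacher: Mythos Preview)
Your reduction is correct and in fact cleaner than the paper's: the change of variable $\alpha=\xi'(u)$, the tilted-measure identity giving $T(u)+1=u/\Psi(\alpha)$, the Nishimori linearisation $\Psi(\alpha)=\mathbb{E}[\tanh(\alpha+\sqrt{\alpha}g)]$, and the formulas $\Psi=1-e^{-\alpha/2}H_{0}$, $\Psi'=e^{-\alpha/2}H_{1}$, $\Psi''=e^{-\alpha/2}(4H_{1}-6H_{2})$ all check out, and $(\spadesuit)$ is exactly equivalent to the paper's intermediate inequality $G_{0}(u)>0$. The two asymptotic regimes are also fine.

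The gap is that you have not proved $(\spadesuit)$ for intermediate $\alpha$; you say so yourself. The suggested route --- Cauchy--Schwarz $H_{1}^{2}\le H_{0}H_{2}$ plus ``one further Stein integration'' to exhibit a non-negative integrand --- is too vague to count as a proof, and I do not see how it would close: after Cauchy--Schwarz you still have a signed combination of $H_{0},H_{1}$ and $e^{-\alpha/2}$ with a $1/\alpha$ weight, and there is no obvious positivity. The paper does \emph{not} stay at the level of moment inequalities. Instead it performs one more Gaussian integration by parts to rewrite the moment condition as $\mathbb{E}[g\cdot\Phi(Yg)]$ for an explicit odd function $\Phi$, and then upgrades the inequality to a \emph{pointwise} one in $x=Yg$:
\[
\tfrac12(p-1)\,x\bigl(\sinh x+2\sinh x\cosh^{-2}x+\arctan(\sinh x)\bigr)\;>\;p\,\tanh^{2}x\,\cosh x,\qquad x\in\mathbb{R}.
\]
This pointwise inequality is then dispatched by the substitution $x=\operatorname{arctanh} t$, the elementary bound $\arctan(t)/t\ge 3/(1+2\sqrt{1+t^{2}})$, and a Sturm--theorem count of the real roots of an explicit degree-$5$ polynomial in $t^{2}$, followed by a numerical check at the unique local minimum. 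None of this appears in your sketch. If you want to salvage your approach, the missing idea is precisely this passage from a Gaussian-moment inequality to a pointwise inequality; once you have a pointwise statement, hard analysis (rather than soft convexity bounds like Cauchy--Schwarz) becomes available.
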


Now we consider the collection of all probability measures on $[0, 1]$ whose support consists of finitely many atoms, denoted by $M_d[0,1]$. For each $\mu \in M_d[0,1],$ there exists the following unique triplet $(k,\mathbf{m},\mathbf{q})$ satisfying that
\begin{eqnarray*}
&\mathbf{m}:& m_0=0 \leq m_1 < m_2 < \cdots <m_k \leq m_{k+1}=1, \\
&\mathbf{q}:& q_0=0 \leq q_1 < q_2 < \cdots < q_{k+1} \leq q_{k+2}=1.
\end{eqnarray*}
such that $\mu([0,q_p])=m_p$ for $0 \leq p \leq k+1.$ Since the distribution of $\mu$ is a step function, we can then solve the Parisi PDE \eqref{ParisiPDE} explicitly by the Cole-Hopf transformation. To be more specific, for $q_{k+1} \leq u \leq 1,$
\begin{eqnarray*}
\Phi_\mu(x,u)= \log \cosh x+\frac12 \big[ \xi'(1)-\xi'(u) \big]
\end{eqnarray*}
and for $q_p \leq u <q_{p+1}$ with $0 \leq p \leq k,$
\begin{eqnarray*}
\Phi_\mu(x,u)= \frac{1}{m_p} \log \mathbb{E} \exp m_p \Phi_\mu(x+g \sqrt{\xi'(q_{p+1})-\xi'(u)} ,q_{p+1}),
\end{eqnarray*}
where $g$ is a standard Gaussian random variable.

Based on the explicit solution of the Parisi PDE introduced above, we are now ready to prove Proposition \ref{rscriterion}.
\begin{proof}[Proof of Proposition \ref{rscriterion}]

Consider any $\mu \in M_d[0,1]$ whose support consists of only one point. Then $\mu$ corresponds to the following two sequences:
\begin{eqnarray*}
&\mathbf{m}:& 0=m_0 \leq m_1 =1, \\
&\mathbf{q}:& 0=q_0 = q_1 < q_2 =1,
\end{eqnarray*} 
and $\mu([0,q_p])=m_p$ for $p=0,1.$

Based on the Cole-Hopf transformation, it holds that for $0 \leq u \leq 1,$
\begin{eqnarray*}
\Phi_\mu(x,u)= \log \cosh x +\frac12 \big[ \xi'(1)-\xi'(u) \big],
\end{eqnarray*}
which implies that
\begin{eqnarray*}
W_\mu(u)= \log \cosh B(\xi'(u)), u \in [0,1],
\end{eqnarray*}
and
\begin{eqnarray*}
\Gamma_\mu(u)=\frac{\mathbb{E}_g \tanh^2 \big( \sqrt{\xi'(u)} g\big) \cosh \big( \sqrt{\xi'(u)} g\big)}{ \mathbb{E}_g \cosh \big( \sqrt{\xi'(u)} g\big) }.
\end{eqnarray*}
Here $g$ is a standard Gaussian random variable and we use the relation $\mathbb{E}_g \cosh \big( \sqrt{\xi'(u)} g\big)= \exp\big( \frac{\xi'(u)}{2}\big).$
Note that $D_\beta(u) =\Gamma_\mu(u)-u=F_{\mu}(u).$

Recall the definition of \eqref{boundary} and note that
\begin{eqnarray}\label{compderiv}
\frac{d}{du} \big\{ C_\beta(u) \big\}&=&\frac{\xi''(u)}{2\sqrt{\xi'(u)}} \cdot \frac{    \Big[ \mathbb{E} [ g \sinh (\sqrt{\xi'(u)} g) \log \cosh (\sqrt{\xi'(u)} g) ]+\mathbb{E} [g \sinh (\sqrt{\xi'(u)} g)  ] \Big] }{\mathbb{E}[ \cosh \big( \sqrt{\xi'(u)} g\big)]} \nonumber \\
&&- \frac{ \xi''(u)}{2} \cdot \frac{ \mathbb{E} [ \cosh (\sqrt{\xi'(q)} g) \log \cosh (\sqrt{\xi'(q)} g) ]  }{\mathbb{E}[ \cosh \big( \sqrt{\xi'(u)} g\big)]}-\frac12 \xi''(u)-\frac12 u\xi''(u) \nonumber \\
&=&\frac{\xi''(u)}{2} \cdot \frac{    \mathbb{E} [  \tanh (\sqrt{\xi'(u)} g) \sinh (\sqrt{\xi'(u)} g) ]  }{\mathbb{E}[ \cosh \big( \sqrt{\xi'(u)} g\big)]} -\frac12 u\xi''(u) \nonumber \\
&=&\frac{\xi''(u)}{2} \cdot F_{\mu}(u)= \frac{\xi''(u)}{2} \cdot D_\beta(u) \nonumber \\
&=&  \frac{d}{d u} \big \{ f_{\mu}(u) \big \}, 
\end{eqnarray}
where the second equality follows from a standard application of  Gaussian integration by parts.
Since $C_\beta(0)=f_{\mu}(0),$ we then obtain that $C_\beta(u)=f_{\mu}(u).$ Also for $u \geq 0,$ $\frac{d}{d u} \big \{ f_{\mu}(u) \big \}$ has the same sign as  $D_\beta(u).$

By Theorem \ref{thmcriterion}, the Parisi measure $\mu_P$ is RS if and only if  $ f_{\mu_P}(u) \leq 0$, for $0 \leq u \leq 1.$ Based on the definition of  $ f_{\mu_P}(u)$ and  $ F_{\mu_P}(u)$, $\mu_P$ is RS if and only if for  $q \in [0,1]$ with $F_{\mu_P}(q)=0$, it holds that $f_{\mu_P}(q) \leq 0.$ Equivalently, $\mu_P$ is RS if and only if for  $q \in [0,1]$ with $D_\beta(q)=0$, it holds that $C_\beta(q) \leq 0.$

\end{proof}

We now prove the first part of our main results in Theorem \ref{mainthm}.
\begin{proof}[Proof of Theorem \ref{mainthm}(1)]
For convenience, we set $Y=\sqrt{\xi'(u)}$ and then $\frac{d}{du}Y=\frac{p-1}{2u}Y.$ Now we recall the system of equations \eqref{boundary} and rewrite it as follows:
\begin{eqnarray*}
\left\{
\begin{array}{lcl}
C_\beta(u)=\frac{ \mathbb{E} [ \cosh (Y g) \log \cosh (Y g) ] }{\mathbb{E} [ \cosh (Y g) ] }-\frac12 Y^2 -\frac12 \theta(u)=0 \\
D_\beta(u)=\frac{ \mathbb{E} [ \tanh^2 (Y g)  \cosh (Y g) ] }{\mathbb{E} [ \cosh (Y g) ] }- u=0
\end{array} \right..
\end{eqnarray*}

 We first prove that if the solution to \eqref{boundary} exists, then it must be unique. Assume that $(\beta_0,u_0)$ is a solution to \eqref{boundary}. To prove the uniqueness, it suffices for us to show that $\frac{d}{d\beta} \big\{C_\beta(u) \big\}_{\big|(\beta,u)=(\beta_0,u_0)}>0.$ 

Define the following function with respect to $Y=\sqrt{\xi'(u)}$ $$C_1(Y):=\frac{ \mathbb{E} [ \cosh (Y g) \log \cosh (Y g) ] }{\mathbb{E} [ \cosh (Y g) ] }-\frac12 Y^2,$$  and then $C_\beta(u)=C_1(Y) -\frac12 [u \xi'(u) -\xi(u)]$. 
Recalling from \eqref{compderiv}, we obtain that 
\begin{eqnarray*}
 \frac{\xi''(u)}{2} \cdot D_\beta(u)=\frac{d}{du} \big\{ C_\beta(u) \big\} = \frac{\partial}{\partial Y} \big\{ C_1(Y) \big\} \cdot \frac{(p-1)}{2u}Y -\frac12 u \xi''(u) ,
\end{eqnarray*}
 which implies that
 \begin{eqnarray*}
 \frac{\partial}{\partial Y} \big\{ C_1(Y) \big\}_{\big|(\beta,u)=(\beta_0,u_0)}=u\cdot Y.
 \end{eqnarray*}
 We now compute $\frac{\partial}{\partial \beta} \big\{C_\beta(u) \big\}_{\big|(\beta,u)=(\beta_0,u_0)}$ as follows:
 \begin{eqnarray}\label{cpartial}
 &&\frac{\partial}{\partial \beta} \big\{C_\beta(u) \big\}_{\big|(\beta,u)=(\beta_0,u_0)} \nonumber \\
 &=&  \Big\{  uY \cdot \Big( \frac{(p-1)Y}{2 u}\cdot \frac{\partial u}{\partial \beta} +  \frac{ Y}{ \beta}  \Big)- \frac{1}{2}u \xi''(u) \cdot \frac{\partial u}{\partial \beta}  -\frac1\beta [u \xi'(u) -\xi(u)] \Big\}_{\big|(\beta,u)=(\beta_0,u_0)} \nonumber \\
 &=&  \Big\{  \Big( \frac{(p-1)Y^2}{2 } - \frac{1}{2}u \xi''(u)  \Big) \cdot \frac{\partial u}{\partial \beta} +  \frac{ uY^2}{ \beta}  -\frac1\beta [u \xi'(u) -\xi(u)] \Big\} _{\big|(\beta,u)=(\beta_0,u_0)} \nonumber \\
 &=& \frac{1}{\beta} \xi(u_0)>0,
 \end{eqnarray}
which guarantees the uniqueness of the solution to \eqref{boundary}.

We then prove that for any $p \geq 3,$ \eqref{boundary} must have a solution $(\beta,u)$ with $\beta>0$ and $u \in (0,1).$  

We show that when $\beta$ is sufficiently large, $C_\beta(u)$ has two critical points in $(0,1)$.
By Theorem \ref{propcd1}(2), we know that  for $\beta$ sufficiently large, it holds that $C_\beta(1)>0.$ We then claim that $C_\beta(u)$ has 2 critical points $u_1<u_2$ in $(0,+\infty).$ Indeed, by Theorem \ref{propcd1}(1), $C_\beta(u)$ has either zero or two critical points in $(0,+\infty).$ Assume that $C_\beta(u)$ has no critical points in $(0,+\infty).$ Since by Theorem \ref{propcd1}(1)(2), it holds that $\lim_{u \rightarrow +\infty} C_\beta(u) = -\infty$ and then  it holds that $C_\beta(u)<0$ for any $u>0$, which leads to a contradiction.    
Moreover, by Theorem \ref{propcd1}(1)(4), we obtain that $D_\beta(1)<0$ and then $\frac{d}{du} \big\{ C_\beta(u)\big\}_{|u=1}<0.$ Since $u_1$ is a local minimum point of $C_\beta(u)$ and $u_2$ is a local maximum point, we conclude that $0<u_1<u_2<1$ and $C_\beta(u_2)>0$ for $\beta$ sufficiently large.

We then show that when $\beta$ is sufficiently small, $C_\beta(u)$ has no critical points in $(0,1)$.
Since by Theorem \ref{propcd1}(2), for any $u >0,$ $ D_0(u)<0,$ it yields that for $\beta$ sufficiently small, $D_\beta(u)$ has no zeros in $(0,+\infty)$ and $C_\beta(u)$ has no critical points in $(0,+\infty)$. By Theorem \ref{propcd1}(3), $C_\beta(u)$ is strictly decreasing near zero.  The fact that $C_\beta(0)=0$ then yields that when $\beta$ sufficiently small, $C_\beta(u)<0$ for any $u>0$.

Combining the arguments above, for any $p \geq 3,$ by the continuity of $\beta$, there must exist a solution to \eqref{boundary} in $(0,\infty) \times (0,\infty)$, which is then also unique. We denote this unique solution to \eqref{boundary} by $(\beta^p_1,u^p_1).$ Now we claim that it holds that $u^p_1 \in (0,1).$ Indeed, if $u^p_1 \geq 1,$ by continuity, there must exist $\beta> \beta^p_1,$ such that $D_\beta(1)>0,$ which leads to contradiction. 

Since for $\beta$ sufficiently small, it holds that $C_\beta(u)<0,u>0$, we then obtain that for $\beta \in (0,\beta^p_1),$ $C_\beta(u)<0,u>0.$ By Proposition \ref{rscriterion}, the Parisi measure is replica symmetric for $\beta \in (0,\beta^p_1).$ Since $\frac{d}{d\beta} \big\{C_\beta(u) \big\}_{\big|(\beta,u)=(\beta^p_1,u^p_1)}>0,$  the Parisi measure is not replica symmetric for $\beta \in (\beta^p_1,+\infty).$

\end{proof}

We now prove Theorem \ref{propcd1} as follows:
\begin{proof}[Proof of Theorem \ref{propcd1}]
Firstly, we prove Theorem \ref{propcd1}(1).
 Recalling that $Y=\sqrt{\xi'(u)}$, we rewrite $T(u)$ as follows $$T(u)= \frac{u \mathbb{E} [ \cosh (Y g) ] }{ \mathbb{E} [ \tanh^2 (Y g)  \cosh (Y g) ] } -1\text{ for } u \geq 0.$$ Note that  for $u \geq 0,$ $T(u)$ has the opposite sign as $\frac{d}{du} \big\{ C_\beta(u) \big\}$ and $D_\beta(u).$ In particular, $D_\beta(u)=0$ is then equivalent to $T(u)=0.$

When $u \rightarrow 0^+,$  it holds that, by L'Hopital's rule, 
\begin{eqnarray*}
 \lim_{u \rightarrow 0^+} \frac{u \mathbb{E} [ \cosh (Y g) ] }{ \mathbb{E} [ \tanh^2 (Y g)  \cosh (Y g) ] } &=&  \lim_{u \rightarrow 0^+} \frac{ \mathbb{E} [ \cosh (Y g) ] + \frac{p-1}{2}Y \mathbb{E} [ \cosh (Y g) ]}{\frac12 \xi''(u) \mathbb{E} [ \tanh^2 (Y g)  \cosh^{-1}(Y g)+\cosh^{-3}(Yg) +\cosh(Yg) ] }  \\
 &=&+\infty,
 \end{eqnarray*}
which yields that $\lim_{u \rightarrow 0^+} T(u)=+\infty.$

Moreover, since the following inequality holds for any $u>0,$
$$\mathbb{E} [ \tanh^2 (Y g)  \cosh (Y g) ]< \mathbb{E} [  \cosh (Y g) ],$$
we then obtain that 
$$T(u)>u-1 \text{ and then } \lim_{u \rightarrow +\infty}T(u)=+ \infty.$$

In order to prove Theorem \ref{propcd1}, the key observation is that
 $T(u)$ is a convex function in $(0,+\infty)$ given by Lemma \ref{tu}. Therefore, $T(u)$ has either 2 or 0 zeros in $(0,+\infty).$ When $T(u)$ has 2 zeros, we denote them by $u_1<u_2.$  Since $\lim_{u \rightarrow 0^+} T(u)= +\infty$ and  $\lim_{u \rightarrow +\infty}T(u)=+ \infty$, there exists $\epsilon>0$ such that $T(u)>0$ for $(u_1-\epsilon,u_1) \cup (u_2,u_2+\epsilon)$ and 
$T(u)<0$ for $(u_1,u_1+\epsilon) \cup ( u_2-\epsilon,u_2).$ Since for $u \geq 0,$ $T(u)$ has the opposite sign as $\frac{d}{du} \big\{ C_\beta(u) \big\}$, we obtain that $u_1$ is a local minimum point of $C_\beta(u)$ and $u_2$ is a local maximum point. 

Now we prove Theorem \ref{propcd1}(2). We find an upper bound for  $\mathbb{E}[\cosh(Yg) \log \cosh(Yg)]$ as follows:
\begin{eqnarray*}
\mathbb{E}[\cosh(Yg) \log \cosh(Yg)]&=& \frac{2}{\sqrt{2 \pi}} \int^{+\infty}_0 e^{-\frac12s^2} \cosh(Ys) \log \cosh(Ys) ds\\
&\leq & \frac{2}{\sqrt{2 \pi}} \int^{+\infty}_0 e^{-\frac12s^2} \cosh(Ys) \cdot (Ys) ds\\
&=& \frac{2Y}{\sqrt{2 \pi}} +\frac{2Y^2}{\sqrt{2 \pi}}  \int^{+\infty}_0 e^{-\frac12s^2} \sinh(Ys) ds\\
&=& \frac{2Y}{\sqrt{2 \pi}} +  {e^{\frac{Y^2}{2}} \cdot {Y^2}  \text{erf }(\frac{Y}{\sqrt{2}})},
\end{eqnarray*}
where $\text{erf }(u)=\frac{2}{\sqrt{2\pi}} \int^u_0 e^{-\frac{1}{2}t^2}dt,$ for $u \geq 0.$

We then obtain that for $u \geq 0,$
\begin{eqnarray*}
C_\beta(u) &\leq& \sqrt{\frac{2}{ \pi}} \cdot  Ye^{-\frac{Y^2}{2}} +  {\text{erf }(\frac{Y}{\sqrt{2}})} \cdot \xi'(u) -\frac12 \xi'(u) -\frac12 [u \xi'(u) -\xi(u)]\\
&=& \sqrt{\frac{2}{ \pi}} \cdot  Ye^{-\frac{Y^2}{2}} +  \Big( {\text{erf }(\frac{Y}{\sqrt{2}})} - \frac12 \Big) \beta^2pu^{p-1}  -\frac{\beta^2(p-1)}{2} \cdot  u^p
\end{eqnarray*}
which implies that  $\lim_{u \rightarrow +\infty} C_\beta(u) = -\infty$. 

We also find the following lower bound for  $\mathbb{E}[\cosh(Yg) \log \cosh(Yg)]$:
\begin{eqnarray*}
\mathbb{E}[\cosh(Yg) \log \cosh(Yg)]
&\geq & \frac{2}{\sqrt{2 \pi}} \int^{+\infty}_0 e^{-\frac12s^2} \cosh(Ys) \cdot (Ys) ds- \log 2 \cdot  \mathbb{E}[\cosh(Yg)]\\
&=& \frac{2Y}{\sqrt{2 \pi}} +  {e^{\frac{Y^2}{2}} \cdot {Y^2}  \text{erf }(\frac{Y}{\sqrt{2}})} - \log 2 \cdot  \mathbb{E}[\cosh(Yg)],
\end{eqnarray*}
which implies that
\begin{eqnarray*}
C_\beta(1) &\geq& - \log 2 +\sqrt{\frac{2}{ \pi}} \cdot  Ye^{-\frac{Y^2}{2}} + p \cdot \Big(  {\text{erf }(\frac{Y}{\sqrt{2}})}  -1 \Big) \cdot \beta^2  +\frac{1}{2}  \beta^2 \end{eqnarray*}
and then  $ \lim_{\beta \rightarrow + \infty} C_\beta(1) = +\infty.$

We then prove Theorem \ref{propcd1}(3).  By Theorem \ref{propcd1}(1), $C_\beta(u)$ has either zero or two critical points in $(0,+\infty)$. If $C_\beta(u)$ has no critical points in $(0,+\infty),$ since $\lim_{u \rightarrow +\infty} C_\beta(u) = -\infty$, $C_\beta(u)$ is then strictly decreasing in $(0,+\infty).$ We then assume $C_\beta(u)$ has two critical points in $(0,+\infty).$ If $C_\beta(u)$ is increasing near 0, the number of critical points  in $(0,+\infty)$ will then be odd, which leads to a contradiction.

Next, we prove Theorem \ref{propcd1}(4).
For any $q>0$, it holds that for $\beta=0$,
\begin{eqnarray*}
D_0(q)=\frac{ \mathbb{E} [ \tanh^2 (Y g)  \cosh (Y g) ] }{ \mathbb{E} [ \cosh (Y g) ] }-q=-q<0.
\end{eqnarray*}

Note that  for $\beta,u>0,$
$ \mathbb{E} [ \tanh^2 (Y g)  \cosh (Y g) ]< { \mathbb{E} [ \cosh (Y g) ] } $,
it then yields that $D_\beta(1)<0$.

\end{proof}

Based on Proposition \ref{rscriterion} and Theorem \ref{propcd1}, we prove Proposition \ref{criterion} as follows:
\begin{proof}[Proof of Proposition \ref{criterion}]
If $C_\beta(u)$ has two critical points in $(0,1),$ we denote the larger one by $c_\beta$, which is a local maximum point.
We claim that Condition (1) in Proposition \ref{criterion} is equivalent to $C_{\beta_1}(c_{\beta_1})<0$. Also, Condition (2) in Proposition \ref{criterion} is equivalent to $C_{\beta_1}(c_{\beta_2})>0$. Then by intermediate value theorem, there exists $\beta^p_1 \in (\beta_1,\beta_2)$ such that $C_{\beta^p_1}(c_{\beta^p_1})>0$. Note that $D_\beta(c_\beta)=0$ for any $\beta>0.$  Set $q^p_1=c_{\beta^p_1}.$ Then $(\beta^p_1,q^p_1)$ is the unique solution to \eqref{boundary}. 

In order to prove the claim, we first consider Condition (1) in Proposition \ref{criterion}. Since $D_{\beta_1}(q^1_0) > D_{\beta_1}(q^2_0)>0>D_{\beta_1}(q^1_1) > D_{\beta_1}(q^2_1)$, there exists $c_{\beta_1} \in (q^2_0, q^1_1 )$ such that $D_{\beta_1}(c_{\beta_1})=0$ and $D'_{\beta_1}(c_{\beta_1})<0$. Therefore by Theorem \ref{propcd1}, $q=c_{\beta_1}$ is a local maximum of $C_{\beta_1}(q).$ 

Note that for $q \geq 0,$ $$\frac{d}{dq} \big\{C^1_\beta(q)\big\}=\frac{\xi''(q)}{2} \cdot \Big( \frac{ \mathbb{E} [ \sinh^2 (\sqrt{\xi'(q)} g)  \cosh^{-1} (\sqrt{\xi'(q)} g) ] }{\mathbb{E} [ \cosh (\sqrt{\xi'(q)} g) ] }+1 \Big)>0,$$
which implies that $C^1_{\beta_1}(c_{\beta_1})<C^1_{\beta_1}(q^1_1).$
We then obtain the following relation
\begin{eqnarray*}
C_{\beta_1}(c_{\beta_1})= C^1_{\beta_1}(c_{\beta_1}) - C^2_{\beta_1}(c_{\beta_1}) < C^1_{\beta_1}(q^1_1) - C^2_{\beta_1}(q^2_0)<0.
\end{eqnarray*}

We then consider  Condition (2) in Proposition \ref{criterion}.  By Theorem \ref{propcd1}, $C_{\beta_2}(u)$ has two critical points in $(0,1)$ and $c_{\beta_2}$  is the local maximum point. Since $C_{\beta_2}(q_2)>0$, it yields that $C_{\beta_2}(c_{\beta_1})>0.$

\end{proof}

Finally, we prove Lemma \ref{tu} as follows:
\begin{proof}[Proof of Lemma \ref{tu}]
We start by computing the derivative of $\frac{d}{du}\big\{ T(u) \big\}$ as follows:
\begin{eqnarray}\label{similarcomp}
\frac{d}{du}\big\{ T(u) \big\}&=& \frac{1}{ \big( \mathbb{E} [ \tanh (Y g)  \sinh (Y g) ] \big)^2} \cdot \Big\{  \nonumber \\
&& \mathbb{E} [ \tanh (Y g)  \sinh (Y g) ] \cdot \Big( \mathbb{E} [ \cosh (Y g) ] + \frac{p-1}{2} Y \mathbb{E} [ g  \sinh (Y g) ] \Big) \nonumber\\
&&- \frac{p-1}{2}Y \mathbb{E} [   \cosh (Y g) ] \cdot \Big( \mathbb{E} [g \cosh^{-2} (Y g)  \sinh (Y g) ] +\mathbb{E} [ g\tanh (Y g)  \cosh (Y g) ] \Big) \Big\}\nonumber\\
&=&\frac{1}{ \big( \mathbb{E} [ \tanh (Y g)  \sinh (Y g) ] \big)^2} \cdot \Big\{ (p-1)Y^2 \mathbb{E} [   \cosh (Y g) ]   \mathbb{E} [ \cosh^{-3} (Y g)  \sinh^2 (Y g) ] \nonumber \\
&& \mathbb{E} [ \tanh (Y g)  \sinh (Y g) ]   \mathbb{E} [ \cosh (Y g) ] - (p-1) Y^2 \mathbb{E} [   \cosh (Y g) ] \mathbb{E} [ \cosh^{-1} (Y g) ]  \Big\} \nonumber \\
&=& \frac{ \mathbb{E} [  \cosh (Y g) ] \Big( \mathbb{E} [ \tanh (Y g)  \sinh (Y g) ] -(p-1) Y^2 \mathbb{E} [ \cosh^{-3} (Y g) ]  \Big) }{  \big( \mathbb{E} [ \tanh (Y g)  \sinh (Y g) ] \big)^2 }
\end{eqnarray}
Here the second equality follows from another standard application of Gaussian integration by parts.

We then start computing the second derivative of $T(u).$ In order to simplify our computations, we set $a_k:=\mathbb{E}[\cosh^k(Yg)]$, for any $k \in \mathbb{Z}.$  We can then rewrite $\frac{d}{du}\big\{ T(u) \big\}$ as follows:
\begin{eqnarray*}
\frac{d}{du}\big\{ T(u) \big\}= \frac{  a_1    }{  \big( a_1-a_{-1}\big) } -(p-1) Y^2 \cdot \frac{ a_1 a_{-3}   }{  \big( a_1-a_{-1}\big)^2 }
\end{eqnarray*}
Here we use the relation that $\mathbb{E} [ \tanh (Y g)  \sinh (Y g) ]= \mathbb{E} [ \cosh (Y g) ]-\mathbb{E} [   \cosh^{-1} (Y g) ].$

Note that for any $k \in \Z$,
\begin{eqnarray*}
\frac{d}{dY} \big\{ a_k \big\} &=& k \mathbb{E} [g \cosh^{k-1}(Yg) \sinh(Yg)] \\
&=&k Y a_k+ k(k-1)Y \mathbb{E} [\cosh^{k-2}(Yg) \sinh^2(Yg)]\\
&=& k^2 Y a_k -k(k-1)Y a_{k-2}.
\end{eqnarray*}
In particular, it yields that $$\frac{d}{dY} \{a_1\}=Ya_1,   \frac{d}{dY} \{a_{-1} \}=Ya_{-1}-2Ya_{-3}\text{ and }\frac{d}{dY} \{a_{-3} \}=9Ya_{-3}-12Ya_{-5}.$$

We are now ready to compute the second derivative of $T(u)$ as follows:
\begin{eqnarray*}
\frac{d^2}{du^2}\big\{ T(u) \big\}&=& \frac{(p-1)Y}{(a_1-a_{-1})^22u}  \cdot \Big \{ a_1 \frac{d}{dY} \{a_{-1}\} -a_{-1} \frac{d}{dY} \{a_1\} \Big\} -\frac{(p-1)^2Y^2}{u} \cdot \frac{ a_1 a_{-3}   }{  \big( a_1-a_{-1}\big)^2 }\\
&&-\frac{(p-1)^2Y^3}{2u(a_1-a_{-1})^3} \Big\{ a^2_1 \frac{d}{dY} \{a_{-3}\}-a_1a_{-1} \frac{d}{dY} \{a_{-3}\}-a_{-1} a_{-3} \frac{d}{dY} \{a_{1} \}\\
&&- a_1a_{-3} \frac{d}{dY} \{a_{1}\} +2a_1 a_{-3} \frac{d}{dY} \{a_{-1}\}   \Big\} \\
&=& - \frac{p(p-1)Y^2a_1a_{-3}}{u(a_1-a_{-1})^2}  \\
&&-\frac{(p-1)^2Y^4a_1}{u(a_1-a_{-1})^3} \cdot \Big\{ -2a^2_{-3}-4a_{-1}a_{-3}+4a_1a_{-3}-6a_1a_{-5}+6a_{-1}a_{-5} \Big\} \\
&=&  \frac{(p-1)a_1Y^2}{u(a_1-a_{-1})^3} \Big\{ -pa_1a_{-3}+pa_{-1}a_{-3} +2(p-1) Y^2a^2_{-3} \\
&&+4(p-1)Y^2a_{-1}a_{-3} -4(p-1)Y^2a_1a_{-3}+6(p-1)Y^2a_1a_{-5} -6(p-1)Y^2a_{-1}a_{-5} \Big\}\\
&:=&  \frac{(p-1)a_1Y^2}{u(a_1-a_{-1})^3}  \cdot G_0(u)
\end{eqnarray*}

Note that both $a_1$ and $a_1-a_{-1}$ are positive for $u >0.$ In order to show that $T(u)$ is convex on $[0,\infty)$, it's then  equivalent for us to show that $G_0(u)>0$ for $u \in (0,\infty).$ 

We now rewrite $G_0(u)$ as follows:
\begin{eqnarray*}
G_0(u)&=&-pa_{-3} \mathbb{E} [\cosh(Yg) \tanh^2(Yg) ]-4(p-1)Y^2 a_{-3} \mathbb{E} [\cosh(Yg) \tanh^2(Yg)]\\
&&+2(p-1)Y^2a^2_{-3}+6(p-1)Y^2a_{-5} \mathbb{E}[\cosh(Yg) \tanh^2(Yg)] \\
&=& \frac{3(p-1)}{2}Y^2 \mathbb{E}[\cosh(Yg) \tanh^2(Yg)] \cdot (4a_{-5}-3a_{-3}) \\
&&+a_{-3} \Big\{ \frac12(p-1)Y^2 \mathbb{E}[\cosh(Yg) \tanh^2(Yg)]+2(p-1)Y^2a_{-3}-p\mathbb{E}[\cosh(Yg)\tanh^2(Yg)] \Big\}.
\end{eqnarray*}

Since for $u >0,$ $\frac{1}{Y}\mathbb{E}[g\sinh(Yg)\cosh^{-4}(Yg)]=-3a_{-3} +4a_{-5}> 0$,  it suffices for us to show that 
\begin{eqnarray}\label{ineq0}
\frac12(p-1)Y^2 \big(  \mathbb{E}[ \tanh^2(Yg) \cosh(Yg)]+4a_{-3} \big)>p\mathbb{E}[\tanh^2(Yg)\cosh(Yg)].
\end{eqnarray}
Notice that 
$$Y\mathbb{E}[\tanh^2(Yg) \cosh(Yg) +4a_{-3}]=\mathbb{E}\big[g\big(\sinh(Yg)+ 2 \sinh(Yg) \cosh^{-2}(Yg)+\arctan\big( \sinh(Yg) \big) \big) \big].$$
Therefore \eqref{ineq0} is then equivalent to 
\begin{eqnarray*}
\frac12(p-1) \mathbb{E}\big[Yg\big(\sinh(Yg)+ 2 \sinh(Yg) \cosh^{-2}(Yg)+\arctan\big( \sinh(Yg) \big) \big) \big]>p\mathbb{E}[\tanh^2(Yg)\cosh(Yg)].
\end{eqnarray*}

It's then enough for us to show that, for any $x \in \mathbb{R},$
\begin{eqnarray}\label{ineq1}
\frac12(p-1) x\big(\sinh x+ 2 \sinh x \cosh^{-2}x+\arctan ( \sinh x ) \big) >p\tanh^2x\cosh x.
\end{eqnarray}
We reformulate \eqref{ineq1} as follows:
\begin{eqnarray*}
\frac{x}{\tanh x} \geq \frac{2p }{(p-1)  \Big(1+ 2  \cosh^{-2}x+\frac{\arctan ( \sinh x )}{\sinh x} \Big)  }
\end{eqnarray*}

Based on the inequality $\frac{\arctan(t)}{t} \geq \frac{3}{1+2\sqrt{1+t^2}}$ for $t \in \mathbb{R}$ in \cite{arcsinh}, we set $t= \sinh x$. In order to prove \eqref{ineq1}, it suffices to show that for $x \geq 0,$
\begin{eqnarray}\label{ineq2}
\frac{x}{\tanh x} \geq \frac{2p}{(p-1)  \big(1+ 2  \cosh^{-2}x+ \frac{3}{1+2 \cosh x}  \big)  }.
\end{eqnarray}
Now we set $x=\text{arctanh} t$, then \eqref{ineq2} is equivalent to, for $ t \in (-1,1),$
\begin{eqnarray*}
 \frac{2p}{(p-1)} \cdot \frac{t}{\text{arctanh } t }\leq 1+2(1-t^2)+\frac{3}{1+\frac{2}{\sqrt{1-t^2}}}.
\end{eqnarray*}
Therefore, in order to prove \eqref{ineq2}, it's sufficient for us to show that for $t \in (0,1),$
\begin{eqnarray*}
G_1(t):=\text{arctanh } t-\frac{3t}{ 1+2(1-t^2)+\frac{3}{1+\frac{2}{\sqrt{1-t^2}}} } \geq 0.
\end{eqnarray*}
We then compute the derivative of $G_1(t)$ as follows:
\begin{eqnarray*}
\frac{d}{dt} \big\{G_1(t) \big\} =\frac{\sqrt{1-t^2}B(t)+ A(t) }{2t^2(1-t^2)(t^4-3t^2+3)^2}
\end{eqnarray*}
where $A(t)=5t^{10}-6t^8-51t^6+117t^4-90t^2+27$, $B(t)=36t^6-99t^4+81t^2-27$ and 
$$A^2(t)-(1-t^2)B^2(t)=t^2(t^4-3t^2+3)^2 \cdot (25t^{10}+90t^8-309t^6+324t^4-153t^2+27).$$
Therefore $\frac{d}{dt} \big\{G_1(t) \big\}=0 $ only if $25t^{10}+90t^8-309t^6+324t^4-153t^2+27=0.$
We now claim that $p(t)=0$ has only 2 roots in $[0,1]$, where
$$p(t):=25t^{5}+90t^4-309t^3+324t^2-153t+27.$$
By a standard application of Sturm's theorem, we can find that  $p(t)$ has exactly 2 roots in $[0,1].$

Since $p(t)=0$ has only 2 roots in $[0,1],$ $G_1(t)$ then has only 2 critical points in $[0,1].$ Since $p(t^2)>0,$ for $t=0.65$ and $p(t^2)<0$, for $t=0.66$, then $G_1(t)$ has a local maxima point in $[0.65,0.66].$ Since $p(t^2)<0,$ for $t=0.94$ and $p(t^2)>0$, for $t=0.95$, then $G_1(t)$ has the other critical point in $[0.94,0.95],$ which is a local minima. We denote this unique local minima of $G_1(t)$ in $[0,1]$ by $t_m.$ 

Then it holds that
$$G_1(t_m) \geq  \text{arctanh } 0.94  -\frac{3 \cdot 0.95}{ 1+2(1-0.95^2)+\frac{3}{1+\frac{2}{\sqrt{1-0.95^2}}} } >0.$$
Here we use the fact that both $\text{arctanh }t$  and $\frac{3t}{ 1+2(1-t^2)+\frac{3}{1+\frac{2}{\sqrt{1-t^2}}} } $ are increasing in $(-1,1).$

Since $G_1(0)=0$ and $\lim_{t \rightarrow 1}G_1(t)=+\infty,$ we then conclude that $G_1(t) > 0$ for $t \in (0,1).$

\end{proof}

\subsection{Proof of One-Step Replica Symmetry}\label{section3.2}

In this section, we consider the case that the Parisi measure is 1RSB, i.e. $\mu_P=m \delta_0+(1-m) \delta_q,$ for some $0<m,q<1.$

Similar to Proposition \ref{rscriterion},  we provide a criterion for the Parisi measure to be 1-RSB based on the explicit solution of the Parisi PDE.

Consider any measure $\mu \in M[0,1]$ consisting of two atoms. Then it can be expressed by $m,q\in (0,1)$ and corresponds to the following two sequences:
\begin{eqnarray*}
&\mathbf{m}:& 0=m_0 \leq m \leq m_2 =1, \\
&\mathbf{q}:& 0=q_0 = q_1<q < q_3 =1,
\end{eqnarray*} 
such that $\mu([0,q_p])=m_p$ for $p=0,1,2.$

By Cole-Hopf transformation, the solution to the Parisi PDE is then as follows: for $q \leq u \leq 1,$
\begin{eqnarray*}
\Phi_\mu(x,u)= \log \cosh x +\frac12 \big[ \xi'(1)-\xi'(u) \big].
\end{eqnarray*}
and for $0 \leq u \leq q,$
\begin{eqnarray*}
\Phi_\mu(x,u)= \frac1m \log \mathbb{E} \big[ \cosh^m \big(x+g \sqrt{\xi'(q)-\xi'(u)} \big) \big] +\frac12 \big[ \xi'(1)-\xi'(q) \big],
\end{eqnarray*}
where $g$ is a standard Gaussian random variable.

Now for any $a>b \geq 0,$ we define $Y_{a-b}=\sqrt{\xi'(a)-\xi'(b)}.$ In particular, when $b=0$, we abbreviate $Y_{a-b}$ as $Y_a.$
By the definition of $W_\mu(u)$, we obtain that for $u \in [q,1],$
\begin{eqnarray*}
\Gamma_\mu(u)=\frac{\mathbb{E}\big[ \tanh^2 \big( Y_q  g_1+Y_{u-q} g_2 \big) \cosh \big( Y_q g_1 +Y_{u-q} g_2 \big) \cosh\big( Y_q g_1\big)^{m-1} \big] \cdot  \mathbb{E}[ \cosh \big( Y_q g\big) ]}{ \mathbb{E}[ \cosh \big( Y_q g\big)^m ] \cdot\mathbb{E}[ \cosh \big( Y_u g\big) ] },
\end{eqnarray*}
and for $u \in [0,q),$
\begin{eqnarray*}
\Gamma_\mu(u)=\frac{\mathbb{E}_1\Big[ \frac{\mathbb{E}_2[ \tanh ( Y_u  g_1+Y_{q-u} g_2 ) \cosh ( Y_u g_1+Y_{q-u} g_2 )^m ]^2}{\mathbb{E}_2[ \cosh ( Y_u  g_1+Y_{q-u} g_2 )^m ]} \Big] }{ \mathbb{E}[ \cosh \big( Y_q g\big)^m ]  }.
\end{eqnarray*}
Here $g,g_1,g_2$ are i.i.d. standard Gaussian random variables and $\mathbb{E}_i$ represents the expectation with respect to $z_i,$ for $i=1,2$.

Recall the two functions \eqref{criterionfunc1} and \eqref{criterionfunc2}, we notice that for $0 \leq u \leq q,$
\begin{eqnarray*}
&&f_\mu(u)=-\frac1m \cdot \frac{\mathbb{E}[\cosh(Y_q g)^m \log \cosh(Y_q g)]}{\mathbb{E}[\cosh(Y_q g)^m]} +\frac12[\theta(q)-\theta(u)]\\
&& +\frac{1}{m^2} \cdot  \frac{\mathbb{E}_1\big[ \mathbb{E}_2[\cosh(Y_u g_1+Y_{q-u}g_2)^m] \log \mathbb{E}_2[\cosh(Y_u g_1+Y_{q-u}g_2)^m] \big]}{\mathbb{E}[\cosh(Y_q g)^m]},
\end{eqnarray*}
and for $q\leq u \leq 1$,
\begin{eqnarray*}
&&f_\mu(u)=- \frac{\mathbb{E}[\cosh(Y_q g)^m \log \cosh(Y_q g)]}{\mathbb{E}[\cosh(Y_q g)^m]} -\frac12[\theta(u)-\theta(q)]-\frac12[\xi'(u)-\xi'(q)]\\
&& + \frac{\mathbb{E}_1\Big[ \cosh(Y_qg_1)^m \cdot \frac{\mathbb{E}_2[\cosh(Y_q g_1+Y_{u-q}g_2) \log \cosh(Y_q g_1+Y_{u-q}g_2)]}{\mathbb{E}_2[\cosh(Y_q g_1+Y_{u-q}g_2)] }\Big]}{\mathbb{E}[\cosh(Y_q g)^m]}.
\end{eqnarray*}

By Theorem \ref{thmcriterion}, the Parisi measure $\mu_P$ is 1RSB if and only if  
\begin{enumerate}
\item $ f_{\mu_P}(u) \leq 0$, for $0 \leq u \leq 1,$
\item $f_{\mu_P}(0)=f_{\mu_P}(q)=f'_{\mu_P}(q)=0$.
\end{enumerate}
Here the second condition is equivalent to the following system of equations:
\begin{equation} \label{eq1rsb}  \left\{
\begin{array}{lcl}
C^1_\beta(m,q):=-\frac{1}{m^2} \log \mathbb{E} [\cosh^m(Y_q g)] +\frac1m\frac{\mathbb{E} [\cosh^m(Y_q g) \log \cosh(Y_q g)]}{\mathbb{E} [\cosh^m(Y_q g) ]} -\frac12 \theta(q)=0,  \\
D^1_\beta(m,q):= \frac{\mathbb{E} [\tanh^2(Y_q g) \cosh(Y_q g)^m ]}{\mathbb{E} [\cosh^m(Y_q g) ]} -q=0.
\end{array} \right. \end{equation} 
Indeed we notice that $C_\beta^1(m,q)=f_{\mu_P}(0)$ and $D^1_\beta(m,q)=\Gamma_{\mu_P}(q)-q.$

Based on the computation above, we derive the following proposition to characterize the Parisi measure within the 1RSB phase:
\begin{proposition}[1RSB criterion]\label{1rsbcriterion}
For any $p \geq 3$ and $\beta>0,$ the Parisi measure $\mu_P$ is one-step replica symmetry breaking if and only if the following conditions are satisfied:
\begin{enumerate}
\item  there exists $(m_\beta,q_\beta)\in (0,1) \times (0,1)$ such that \eqref{eq1rsb} holds.
\item For $m=m_\beta$ and $q=q_\beta$, it holds that $ f_{\mu_P}(u) \leq 0$, for $0 \leq u \leq 1.$

\end{enumerate}

\end{proposition}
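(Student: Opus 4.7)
The proof is a direct application of Theorem~\ref{thmcriterion} to the two-atom candidate measure $\mu = m_\beta \delta_0 + (1-m_\beta)\delta_{q_\beta}$ with $(m_\beta,q_\beta)\in(0,1)^2$. Since such a $\mu$ has support $\{0,q_\beta\}$, Theorem~\ref{thmcriterion} characterizes $\mu$ as the (unique) Parisi measure precisely when $f_\mu \le 0$ on $[0,1]$ together with $f_\mu(0)=0$ and $f_\mu(q_\beta)=0$. The plan is to translate these three scalar conditions into the algebraic system \eqref{eq1rsb} and the pointwise inequality of condition~(2).

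\textbf{Key observation: $f_\mu(q_\beta)=0$ is automatic.} The main simplification is that evaluating the $[0,q_\beta]$-formula for $f_\mu(u)$ derived in the excerpt at $u=q_\beta$ forces $Y_{q_\beta-u}=0$; the inner Gaussian expectation $\mathbb{E}_2$ then degenerates, giving $\mathbb{E}_2[\cosh(Y_{q_\beta}g_1)^{m}]=\cosh(Y_{q_\beta}g_1)^m$ and $\log\mathbb{E}_2[\cdots]=m\log\cosh(Y_{q_\beta}g_1)$. The resulting third summand of $f_\mu(q_\beta)$ equals $\tfrac{1}{m}\,\mathbb{E}[\cosh(Y_{q_\beta}g)^{m}\log\cosh(Y_{q_\beta}g)]/\mathbb{E}[\cosh(Y_{q_\beta}g)^{m}]$, which cancels exactly against the first summand, while $\tfrac{1}{2}[\theta(q_\beta)-\theta(q_\beta)]=0$. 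Thus $f_\mu(q_\beta)=0$ identically in $(m,q_\beta)$, so this condition imposes no constraint.

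\textbf{Two active equations.} The same formula at $u=0$ is similarly simplified by $Y_0=0$: the inner expectation becomes deterministic in $g_1$, so the third summand reduces to $\tfrac{1}{m^2}\log\mathbb{E}[\cosh(Y_{q_\beta}g)^m]$. Collecting terms identifies $f_\mu(0)$ with $-C^1_\beta(m_\beta,q_\beta)$, whence the equation $C^1_\beta(m_\beta,q_\beta)=0$ is equivalent to $f_\mu(0)=0$. For the second equation, note that if $\mu_P$ is 1RSB then $q_\beta$ is an \emph{interior} global maximum of the smooth function $f_{\mu_P}$ (since $f_{\mu_P}(q_\beta)=0=\sup f_{\mu_P}$ and $q_\beta\in(0,1)$), so $f'_{\mu_P}(q_\beta)=0$. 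Since $f'_\mu(u)=\tfrac{\xi''(u)}{2}F_\mu(u)$ with $F_\mu(q_\beta)=\Gamma_\mu(q_\beta)-q_\beta$, a direct evaluation of the closed-form $\Gamma_\mu$ for $u\ge q$ at $u=q_\beta$ (where the inner Brownian piece trivializes) yields $F_\mu(q_\beta)=D^1_\beta(m_\beta,q_\beta)$, giving the second equation of \eqref{eq1rsb}.

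\textbf{Assembling both implications.} For the forward direction, if $\mu_P=m_\beta\delta_0+(1-m_\beta)\delta_{q_\beta}$ is 1RSB, Theorem~\ref{thmcriterion} produces the inequality of condition~(2), and the two evaluations above yield \eqref{eq1rsb}. For the converse, given $(m_\beta,q_\beta)$ satisfying \eqref{eq1rsb} and $f_\mu\le 0$, the measure $\mu = m_\beta \delta_0 + (1-m_\beta)\delta_{q_\beta}$ satisfies $f_\mu(0)=0$ (from $C^1_\beta=0$) and $f_\mu(q_\beta)=0$ (automatic), so $\mu(\{f_\mu=0\})=1$; Theorem~\ref{thmcriterion} identifies $\mu$ with the Parisi measure, which is therefore 1RSB. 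The main obstacle is the mechanical but careful bookkeeping of the Cole--Hopf formulas at $u=0$ and $u=q_\beta$: verifying that both inner Gaussian integrals degenerate in the two limits and that the resulting telescopic cancellations produce $-C^1_\beta$ and $D^1_\beta$ on the nose. Note that the equation $D^1_\beta=0$ is in fact redundant in the reverse direction (it would follow from $f_\mu\le 0$ and $f_\mu(q_\beta)=0$ by the interior-max argument), but it is included in the statement to make the necessary-and-sufficient conditions symmetric.
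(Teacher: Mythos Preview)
Your proposal is correct and follows the same route as the paper: apply Theorem~\ref{thmcriterion} to the two-atom measure and identify the support conditions with \eqref{eq1rsb} via the explicit Cole--Hopf formulas. The paper's own argument is the paragraph immediately preceding the proposition, which records exactly that $f_{\mu_P}(0)=f_{\mu_P}(q)=f'_{\mu_P}(q)=0$ is equivalent to \eqref{eq1rsb} via $C^1_\beta(m,q)=f_{\mu_P}(0)$ (up to the sign you note) and $D^1_\beta(m,q)=\Gamma_{\mu_P}(q)-q$; you have simply made explicit two points the paper leaves implicit, namely that $f_\mu(q_\beta)=0$ holds automatically by degeneration of the inner Gaussian, and that $D^1_\beta=0$ is redundant in the reverse direction because $q_\beta$ is an interior maximum.
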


Now we prove the second part of Theorem \ref{mainthm} as follows:
\begin{proof}[Proof of Theorem \ref{mainthm}(2)]
Assume $p \geq 3.$
Firstly, we prove that $\beta=\beta^p_1$ is the boundary of RS and 1RSB phases by showing that when $\beta=\beta^p_1,$ the two conditions in Proposition \ref{1rsbcriterion} are satisfied except $m_1=1.$

Note that  $C^1_\beta(1,q)=C_\beta(q)$ and $D^1_\beta(1,q)=D_\beta(q).$ Recall that for any $p \geq 3,$ $(\beta^p_1,q^p_1)$ is a solution to \eqref{boundary}. Therefore when $\beta=\beta^p_1,$ $(1,q^p_1)$ is a solution to \eqref{eq1rsb} satisfying that $C^1_{\beta^p_1}(1,q^p_1)=C_{\beta^p_1}(q^p_1)=0$ and  $D^1_{\beta^p_1}(1,q^p_1)=D_{\beta^p_1}(q^p_1)=0$.

Furthermore, for any $\beta>0,$ when $m=1$, it holds that for $u,q \in [0,1],$
\begin{eqnarray*}
f_{\mu}(u) &=&-  \frac{\mathbb{E}[\cosh(Y_qg) \log \cosh(Y_qg)]}{\mathbb{E}[\cosh(Y_qg)]}+\frac{\mathbb{E}[\cosh(Y_qg) \log \cosh(Y_qg)]}{\mathbb{E}[\cosh(Y_qg )]}  \\
&& +\frac12[\xi'(q)-\xi'(u)] +\frac12[\theta(q)-\theta(u)]\\
&=& C_{\beta}(u)-C_\beta(q).
\end{eqnarray*}
In particular, when $\beta=\beta^p_1$ and $(m,q)=(1,q^p_1),$ we obtain that
$$f_{\mu_P}(u)=C_{\beta^p_1}(u) \text{ for } u \in [0,1],$$
which implies that $f_{\mu_P}(0)=f_{\mu_P}(q^p_1)=0$ and $f_{\mu_P}(u) < 0, $ for $u \in (0,q^p_1) \cup (q^p_1,1).$ Also, since $q^p_1$ is a local maximum point of $C_{\beta^p_1},$ it holds that $f''_{\mu_P}(q^p_1)<0$.

Secondly, we claim that $\frac{\partial}{\partial m} \big\{ C^1_{\beta}(m,q)\big\}_{|m=1} >0$, for any $\beta>0$ and $q \in (0,1).$
Now for any $\beta,q>0,$ we compute $\frac{\partial}{\partial m} \big\{ C^1_\beta(m,q) \big\}_{|m=1}$ as follows:
\begin{eqnarray*}
&&\frac{\partial}{\partial m} \big\{ C^1_{\beta}(m,q)\big\}_{|m=1}  \\
&=&\Big\{\frac{2}{m^3} \log \mathbb{E} [\cosh^m(Y_qg)] -\frac{2}{m^2} \frac{\mathbb{E} [\cosh^m(Y_qg) \log \cosh(Y_qg)] }{\mathbb{E} [\cosh^m(Y_qg)] }  \\
&&+ \frac{\mathbb{E} [\cosh^m(Y_qg) (\log \cosh(Y_qg))^2]}{\mathbb{E} [\cosh^m(Y_qg)]} -\frac{\mathbb{E} [\cosh^m(Y_qg) \log \cosh(Y_qg)]^2}{ \mathbb{E} [\cosh^m(Y_qg)] ^2} \Big\}_{|m=1}   \\
&=&   \xi'(q)-2 \cdot \frac{\mathbb{E} [\cosh(Y_{q}g) \log \cosh(Y_{q}g)]}{ \mathbb{E} [\cosh(Y_{q}g)] } \\
&&+ \frac{\mathbb{E} [\cosh(Y_{q}g) (\log \cosh(Y_{q}g))^2]}{\mathbb{E} [\cosh(Y_{q}g)]} -\frac{\mathbb{E} [\cosh(Y_{q}g) \log \cosh(Y_{q}g)]^2}{ \mathbb{E} [\cosh(Y_{q}g)] ^2}.
\end{eqnarray*}
Define the following function
\begin{eqnarray*}G_2(x)&:=&x-2 \cdot \frac{\mathbb{E} [\cosh(\sqrt{x} g) \log \cosh(\sqrt{x} g)]}{ \mathbb{E} [\cosh(\sqrt{x} g)] } 
\\&&+ \frac{\mathbb{E} [\cosh(\sqrt{x} g) (\log \cosh(\sqrt{x} g))^2]}{\mathbb{E} [\cosh(\sqrt{x} g)]} -\frac{\mathbb{E} [\cosh(\sqrt{x} g) \log \cosh(\sqrt{x} g)]^2}{ \mathbb{E} [\cosh(\sqrt{x} g)] ^2}\end{eqnarray*}
and  then  $\frac{\partial}{\partial m} \big\{ C^1_{\beta}(m,q)\big\}_{|m=1} =G_2\big(\xi'(q) \big).$
 Notice that $G_2(0)=0.$ In order to prove $\frac{\partial}{\partial m} \big\{ C^1_{\beta}(m,q)\big\}_{|m=1} >0,$ it then suffices for us to show that 
$G_2(x)$ is strictly increasing for $x>0.$

We then compute the derivative of $G_2(x)$ as follows:
\begin{eqnarray*}
\frac{d}{dx}\big\{G_2(x) \big\}&=&\frac{1}{\mathbb{E} [\cosh(Y_x g) ]^2} \cdot \Big\{ \mathbb{E}[\cosh(Y_xg) \log \cosh(Y_x g)] \mathbb{E}[\cosh(Y_xg)^{-1}]\\
&&-\mathbb{E}[\cosh(Y_xg)] \mathbb{E}[\cosh(Y_xg)^{-1} \log \cosh(Y_xg)] \Big\}.
\end{eqnarray*}
By considering Gaussian measure weighted by $\cosh$ and $\cosh^{-1}$, we obtain that by rearrangement inequality,
\begin{eqnarray*}
\frac{\mathbb{E}[\cosh(Y_xg)^{-1} \log \cosh(Y_xg)]} {\mathbb{E}[\cosh(Y_xg)^{-1} ]} < \mathbb{E}[\log \cosh(Y_xg)] < \frac{\mathbb{E}[\cosh(Y_xg) \log \cosh(Y_xg)]} {\mathbb{E}[\cosh(Y_xg) ]},
\end{eqnarray*}
which yields that
\begin{eqnarray*}
\mathbb{E} [\cosh(Y_x g) ]^2 \cdot  \frac{d}{dx}\big\{G_2(x) \big\}&>&  \mathbb{E}[\cosh(Y_xg) ]  \mathbb{E}[\log \cosh(Y_xg)] \mathbb{E}[\cosh(Y_xg)^{-1} ] \\
&&-\mathbb{E}[\cosh(Y_xg)] \mathbb{E}[\cosh(Y_xg)^{-1} \log \cosh(Y_xg)] \\
&>&  \mathbb{E}[\cosh(Y_xg) ]  \mathbb{E}[\log \cosh(Y_xg)] \mathbb{E}[\cosh(Y_xg)^{-1} ] \\
&&-\mathbb{E}[\cosh(Y_xg)]  \mathbb{E}[\cosh(Y_xg)^{-1} ]  \mathbb{E}[\log \cosh(Y_xg)] =0.
\end{eqnarray*}

Thirdly, we show that when $\beta$ is larger than and close to $\beta^p_1,$ \eqref{eq1rsb} must have a solution in $(0,1) \times (0,1).$

Since $D^1_{\beta^p_1}(1,q^p_1)=0$ and $D^1_{\beta^p_1}(1,q)$ is strictly decreasing with respect to $q$ near $q=q^p_1$, then there exists $\epsilon_0>0$ such that for $\beta \in [\beta^p_1,\beta^p_1+\epsilon]$,$m \in [1-\epsilon_0,1]$, $D^1_\beta(m,q)$ has exactly one zero in $[q^p_1-\epsilon_0,q^p_1+\epsilon_0]$ and it is continuous.  We then denote this unique zero by $d_{\beta,m}.$

By the computation in \eqref{cpartial}, it yields that, when $\beta=\beta^p_1,$
 \begin{eqnarray*}
\frac{\partial}{\partial \beta} \big\{ C^1_{\beta}(1,q^p_1)\big\}_{|\beta=\beta^p_1} = \frac{1}{\beta^p_1} \cdot \xi(q^p_1)>0.
 \end{eqnarray*} 
 Then by Theorem \ref{propcd1}(1), there exists $\epsilon_1 \in (0,\epsilon_0)$ such that $C^1_\beta(1,d_{\beta,1})>0$,  for $\beta \in (\beta_1^p,\beta^p_1+ \epsilon_1)$. 

Since $ C^1_{\beta^p_1}(1,q^p_1) =0$ and $\frac{\partial}{\partial m} \big\{ C^1_{\beta}(m,q)\big\}_{\big|m=1} >0$ for any $\beta,q>0$, there exists $\epsilon_2 \in (0,\epsilon_1)$ such that for any $\beta \in [\beta_1^p,\beta^p_1+ \epsilon_2]$, $m \in [1-\epsilon_2,1]$ and $q \in [q^p_1-\epsilon_2, q^p_1+\epsilon_2]$, 
it holds that $C^1_{\beta}(m,q)<0$. In particular, we obtain that $C^1_\beta(m,d_{\beta,m})<0.$

Based on the argument above, by intermediate value theorem, for any $\beta \in [\beta_1^p,\beta^p_1+ \epsilon_2]$, there exists $m_\beta \in  [1-\epsilon_2,1] $ such that $C^1_\beta(m_\beta,d_{\beta,m_{\beta}})=0$. By setting $q_\beta=d_{\beta,m_{\beta}},$ $(m_\beta,q_\beta)$ is a solution to \eqref{eq1rsb} in $(0,1) \times (0,1)$.

Lastly, we show that for any $\beta$ close enough to $\beta^p_1$, $\mu_\beta$ corresponding to $(m_\beta,q_\beta)$ satisfies $ f_{\mu_\beta}(u) \leq 0$, for $0 \leq u \leq 1,$ which is Condition (2) in Proposition \ref{1rsbcriterion}.

Note that  for any $\beta \in [\beta_1^p,\beta^p_1+ \epsilon_2)$ and $\mu_\beta$ with $(m_\beta,q_\beta)$, it holds that $$f_{\mu_\beta}(q_\beta)=f'_{\mu_\beta}(q_\beta)=0 \text{ and } f_{\mu_\beta}(0)=0.$$
Moreover, for any $\mu \in M_d[0,1]$, it holds that
\begin{eqnarray*}
\frac{d}{du}\big\{ f_{\mu}(u) \big\}&=&\frac{\xi''(u)}{2} \big( \Gamma_\mu(u)-u\big)\\
&=& \frac{\xi''(u)}{2} \Big(\big( { \mathbb{E}[ \cosh \big( Y_q g\big)^m ]  } \big)^{-1} G_3(u) -u\Big)
\end{eqnarray*}
where $G_3(u)={\mathbb{E}_1\Big[ \frac{\mathbb{E}_2[ \tanh ( Y_u  g_1+Y_{q-u} g_2 ) \cosh ( Y_u g_1+Y_{q-u} g_2 )^m ]^2}{\mathbb{E}_2[ \cosh ( Y_u  g_1+Y_{q-u} g_2 )^m ]} \Big] }$.

Since for any $f,g$ that are twice differentiable, it holds that for $p \geq 3,$
\begin{eqnarray*}
 \frac{d}{du} \Big\{ {\mathbb{E}_1\Big[ \frac{\mathbb{E}_2[ f ( Y_u  g_1+Y_{q-u} g_2 ) ]^2}{\mathbb{E}_2[ g ( Y_u  g_1+Y_{q-u} g_2 ) ]} \Big] } \Big\}_{|u=0}=0,
 \end{eqnarray*}
which implies that $f^{k}_{\mu}(0)=$ for $k=1,2,\cdots,p-2$ and $f^{p-1}_{\mu}(0)<0$.

Also recall that when  $\beta=\beta_1^p$, we obtain that $f''_{\mu_P}(q^p_1)<0$ and $f_{\mu_P}(u)<0$ for $u \in (0,q^p_1) \cup (q^p_1,1].$ Hence there exists $\epsilon \in (0,\epsilon_2)$ such that  for $\beta \in [\beta_1^p,\beta^p_1+ \epsilon)$, it holds that
 $f_{\mu_\beta}(0)=f_{\mu_\beta}(q_\beta)=0$ and  $f_{\mu_\beta}(u)<0$ for $u \in (0,q_\beta) \cup (q_\beta,1].$ By Proposition \ref{1rsbcriterion}, $\mu_\beta$ is the Parisi measure  for $\beta \in [\beta_1^p,\beta^p_1+ \epsilon)$, which is 1RSB.



\end{proof}

\bibliographystyle{abbrv}
\bibliography{biblio3}

\begin{thebibliography}{10}

\bibitem{ALR}
M.~Aizenman, J.~L. Lebowitz, and D.~Ruelle.
\newblock {Some rigorous results on the Sherrington-Kirkpatrick spin glass
  model}.
\newblock {\em Communications in Mathematical Physics}, 112(1):3 -- 20, 1987.


\bibitem{AChen15PTRF}
A.~Auffinger and W.-K. Chen.
\newblock On properties of {P}arisi measures.
\newblock {\em Probability Theory and Related Fields}, 161(3-4):817--850, 2015.




\bibitem{auffinger2015parisi}
A.~Auffinger and W.-K. Chen.
\newblock {The Parisi formula has a unique minimizer}.
\newblock {\em Communications in Mathematical Physics}, 335(3):1429--1444,
  2015.

\bibitem{AC16}
A.~Auffinger and W.-K. Chen.
\newblock Parisi formula for the ground state energy in the mixed $p$-spin
  model.
\newblock {\em The Annals of Probability}, 45(6B):4617--4631, 11 2017.



\bibitem{auffinger2017sk}
A.~Auffinger, W.-K. Chen, and Q.~Zeng.
\newblock The SK model is infinite step replica symmetry breaking at zero
  temperature.
\newblock {\em Communications on Pure and Applied Mathematics}, 73(5):921--943,
  2020.
  
  \bibitem{arcsinh}
R.~M.~Dhaigude and Y.~J.~Bagul.
\newblock Simple efficient bounds for arcsine and
arctangent functions.
\newblock {\em South East Asian J. of Mathematics and Mathematical Sciences}, Vol. 17, No. 3 (2021), pp. 45-62.

\bibitem{Gardner}
E.~Gardner.
\newblock Spin glasses with $p$-spin interactions.
\newblock {\em Nuclear Physics B}, 257, 747-765, 1985.



\bibitem{guerra2003broken}
F.~Guerra.
\newblock Broken replica symmetry bounds in the mean field spin glass model.
\newblock {\em Communications in Mathematical Physics}, 233(1):1--12, 2003.

\bibitem{Aukosh17PTRF}
A.~Jagannath and I.~Tobasco.
\newblock Some properties of the phase diagram for mixed p-spin glasses.
\newblock {\em Probability Theory and Related Fields}, 167, 615-672, 2017.



\bibitem{mezard1987spin}
M.~M\'{e}zard, G.~Parisi, and M.~A. Virasoro.
\newblock {\em Spin glass theory and beyond}, volume~9 of {\em World Scientific
  Lecture Notes in Physics}.
\newblock World Scientific Publishing Co., Inc., Teaneck, NJ, 1987.



\bibitem{parisi1979infinite}
G.~Parisi.
\newblock Infinite number of order parameters for spin-glasses.
\newblock {\em Physical Review Letters}, 43(23):1754, 1979.


\bibitem{parisi1980infinite}
G.~Parisi.
\newblock A sequence of approximate solutions to the SK model for spin glasses.
\newblock {\em Phys. A.}, 13, L-115.


\bibitem{panchenko}
D.~Panchenko.
\newblock The Parisi formula for mixed p-spin models.
\newblock {\em The Annals of Probability}, 42(3):946--958, 5 2014.



\bibitem{talagrand2006parisi}
M.~Talagrand.
\newblock The {P}arisi formula.
\newblock {\em Annals of Mathematics} (2), 163(1):221--263, 2006.

\bibitem{talagrand2006parisi-b}
M.~Talagrand.
\newblock Parisi measures.
\newblock {\em Journal of Functional Analysis}, 231(2):269--286, 2006.

\bibitem{TalagrandVolI}
M.~Talagrand.
\newblock {\em Mean Field Models for Spin Glasses: Volume I}.
\newblock Springer-Verlag, Berlin, 2010.

\bibitem{Toni}
F.~L. Toninelli.
\newblock About the Almeida-Thouless transition line in the
  Sherrington-Kirkpatrick mean-field spin glass model.
\newblock {\em Europhysics Letters ({EPL})}, 60(5):764--767, 2002.

\end{thebibliography}

\end{document}